\newtheorem{thm}{Theorem}
\newtheorem{mainthm}{Theorem}
\newtheorem{lem}[thm]{Lemma}
\newtheorem{prop}[thm]{Proposition}
\theoremstyle{definition} 
\newtheorem{qu}[thm]{Question}
\theoremstyle{remark} 
\newenvironment{enum}
{\begin{enumerate}\vspace{-.1in}
  \setlength{\itemsep}{1pt}
  \setlength{\parskip}{0pt}
  \setlength{\parsep}{0pt}
}{\end{enumerate}}
\newcommand{\dmo}{\DeclareMathOperator}
\newcommand{\R}{\mathbb{R}}
\newcommand{\co}{\mathbb{C}}\newcommand{\Z}{\mathbb{Z}}
\newcommand{\N}{\mathbb{N}}
\newcommand{\al}{\alpha}\newcommand{\be}{\beta}\newcommand{\ga}{\gamma}\newcommand{\de}{\delta}\newcommand{\ep}{\epsilon}
\newcommand{\Om}{\Omega}\newcommand{\De}{\Delta}\newcommand{\ka}{\kappa}\newcommand{\la}{\lambda}
\newcommand{\Ga}{\Gamma}
\newcommand{\what}{\widehat}\newcommand{\wtil}{\widetilde}\newcommand{\Si}{\Sigma}\newcommand{\Lam}{\Lambda}
\newcommand{\cd}{\cdots}\newcommand{\ld}{\ldots}
\newcommand{\sbs}{\subset}\newcommand{\bbm}{\mathbbm}
\newcommand{\xra}{\xrightarrow}
\newcommand{\ra}{\rightarrow}
\newcommand{\hra}{\hookrightarrow}\newcommand{\onto}{\twoheadrightarrow}
\newcommand{\bb}[1]{\mathbb{#1}}\newcommand{\ca}[1]{\mathcal{#1}}\newcommand{\ov}[1]{\overline{#1}}\newcommand{\mf}{\mathfrak}
\newcommand{\fr}[2]{\frac{#1}{#2}}
\newcommand{\op}{\oplus}\newcommand{\til}{\tilde}
\newcommand{\ti}{\times}
\newcommand{\pair}[1]{\langle #1 \rangle}
\dmo{\sgn}{sign}
\dmo{\we}{\wedge}
\dmo{\ind}{ind}\dmo{\Ind}{Ind}
\dmo{\bop}{\bigoplus}\dmo{\pic}{Pic}
\dmo{\coker}{coker}\dmo{\vol}{Vol}\dmo{\gal}{Gal}\dmo{\perm}{Perm}
\dmo{\tor}{Tor}\dmo{\ext}{Ext}\dmo{\Ext}{Ext}
\dmo{\aut}{aut}
\dmo{\Aut}{Aut}
\dmo{\inn}{Inn}\dmo{\var}{Var}
\dmo{\dep}{depth}\newcommand{\rest}[2]{#1\bigr\vert_{#2}}
\dmo{\ad}{ad}\dmo{\curl}{curl}
\dmo{\hy}{\bb H}\dmo{\Sl}{SL}
\dmo{\SO}{SO}\dmo{\psl}{PSL}
\dmo{\isom}{Isom}\dmo{\Isom}{Isom}
\dmo{\conf}{Conf}
\dmo{\stab}{Stab}\dmo{\Jac}{Jac }
\dmo{\diam}{diam}\dmo{\fix}{Fixed}\dmo{\Fix}{Fix}
\dmo{\injR}{injRad}\dmo{\Ad}{Ad}
\dmo{\esv}{ess-vol}\dmo{\out}{Out}\dmo{\Out}{Out}
\dmo{\nil}{Nil}\dmo{\sol}{Sol}
\dmo{\Div}{div}
\dmo{\SU}{SU}
\dmo{\SP}{SP}
\dmo{\Sp}{Sp}
\dmo{\rk}{rk}
\dmo{\rank}{rank}
\dmo{\psp}{PSp}\dmo{\psu}{PSU}
\dmo{\PU}{PU}\dmo{\pgl}{PGL}
\dmo{\Mod}{Mod}\dmo{\range}{Range}
\dmo{\eu}{eu}\dmo{\mi}{mi}
\dmo{\Log}{Log}\dmo{\supp}{supp}
\dmo{\maps}{Maps}\dmo{\Gr}{Gr}
\dmo{\Pin}{Pin}
\dmo{\Spin}{Spin}\dmo{\Str}{Str}
\dmo{\Sq}{Sq}\dmo{\Symp}{Symp}
\dmo{\pd}{PD}\dmo{\PD}{PD}\dmo{\sig}{Sig}
\dmo{\Set}{Set}\dmo{\Top}{Top}
\dmo{\ev}{ev}\dmo{\St}{St}
\dmo{\Pt}{Pt}\dmo{\pt}{pt}
\dmo{\colim}{colim }\dmo{\Pl}{PL}
\dmo{\String}{String}\dmo{\smear}{smear}
\dmo{\dev}{dev}
\dmo{\met}{Met}\dmo{\contact}{Contact}
\dmo{\teich}{Teich}\dmo{\Teich}{Teich}\dmo{\qi}{QI}
\dmo{\der}{Der}
\dmo{\cl}{Cliff}\dmo{\Cl}{Cl}
\dmo{\Pf}{Pf}
\dmo{\ch}{ch}\dmo{\diag}{diag}
\dmo{\grad}{grad}\dmo{\Char}{char}
\dmo{\spec}{Spec}\dmo{\Arg}{Arg}
\dmo{\rad}{rad}\dmo{\im}{Im}
\dmo{\Hom}{Hom}\dmo{\End}{End}
\dmo{\tr}{tr}\dmo{\id}{Id}
\dmo{\gl}{GL}
\dmo{\sym}{Sym}\dmo{\Sym}{Sym}
\dmo{\com}{Comm}
\dmo{\Lk}{Lk}
\dmo{\CAT}{CAT}
\dmo{\Rep}{Rep}
\dmo{\Conf}{Conf}
\dmo{\PConf}{PConf}
\dmo{\Push}{Push}
\dmo{\Cont}{Cont}
\dmo{\sm}{\setminus}
\dmo{\vn}{\varnothing}
\dmo{\disk}{\mathbb D}
\dmo{\Trd}{Trd}\dmo{\Mat}{Mat}
\dmo{\Riem}{Riem}
\dmo{\Diffn}{\Diff_0}\dmo{\diff}{diff}
\dmo{\Diff}{Diff}\dmo{\homeo}{Homeo}
\dmo{\Homeo}{Homeo}\dmo{\Fr}{Fr}
\dmo{\rot}{rot}\dmo{\Emb}{Emb}
\dmo{\Ham}{Ham}\dmo{\Met}{Met}
\dmo{\Ein}{Ein}\dmo{\CP}{\co P}
\dmo{\Per}{Per}\dmo{\Ric}{Ric}
\newcommand{\C}{\mathbb C}\dmo{\Nrd}{Nrd}
\dmo{\Comp}{Comp}\dmo{\PSC}{PSC}
\dmo{\Cent}{Cent}\dmo{\Orb}{Orb}
\dmo{\aind}{a-ind}\dmo{\tind}{t-ind}
\dmo{\constant}{constant}
\dmo{\Td}{Td}
\dmo{\LMod}{LMod}
\dmo{\SMod}{SMod}
\dmo{\SDiff}{SDiff}
\dmo{\Br}{Br}
\dmo{\csch}{csch}
\dmo{\triv}{triv}
\dmo{\genus}{genus}
\dmo{\Homeq}{HomEq}
\dmo{\PP}{\mathbb{P}}
\dmo{\U}{U}
\dmo{\Gal}{Gal}
\dmo{\BDiff}{\wtil{\Diff}}
\dmo{\BAut}{\wtil{\Aut}}
\dmo{\Iso}{Iso}
\dmo{\codim}{codim}
\dmo{\II}{II}
\dmo{\I}{I}
\dmo{\InjRad}{InjRad}
\dmo{\Inn}{Inn}
\dmo{\sys}{sys}
\dmo{\Comm}{Comm}
\dmo{\PO}{PO}
\dmo{\POm}{P\Om}
\dmo{\ab}{ab}
\dmo{\PSO}{PSO}
\begin{document}

\title{Symmetries of exotic negatively curved manifolds}

\author{Mauricio Bustamante and Bena Tshishiku}



\date{}


\maketitle


\begin{abstract}
Let $N$ be a smooth manifold that is homeomorphic but not diffeomorphic to a closed hyperbolic manifold $M$. In this paper, we study the extent to which $N$ admits as much symmetry as $M$. Our main results are examples of $N$ that exhibit two extremes of behavior. On the one hand, we find $N$ with maximal symmetry, i.e.\ $\Isom(M)$ acts on $N$ by isometries with respect to some negatively curved metric on $N$. For these examples, $\Isom(M)$ can be made arbitrarily large. On the other hand, we find $N$ with little symmetry, i.e.\ no subgroup of $\Isom(M)$ of ``small" index acts by diffeomorphisms of $N$. The construction of these examples incorporates a variety of techniques including smoothing theory and the Belolipetsky--Lubotzky method for constructing hyperbolic manifolds with a prescribed isometry group. \end{abstract}

\section{Introduction}

Throughout this paper, $M=\hy^n/\pi$ denotes a closed hyperbolic manifold with fundamental group $\pi$, and $N$ denotes an \emph{exotic smooth structure} (on $M$), i.e.\ a smooth manifold that is homeomorphic but not diffeomorphic to $M$. Define the \emph{symmetry constant} of $N$ as the supremum 
\[s(N)=\sup_\rho \fr{|\Isom(N,\rho)|}{|\Isom(M)|},\]
over all Riemannian metrics $\rho$ on $N$. In this paper we study the possible values of this invariant. There is an ``easy" bound
\begin{equation}\label{eqn:easy-bound}\fr{1}{|\Isom(M)|}\le s(N)\le 1\end{equation}
that follows from Mostow rigidity and a theorem of Borel (explained below). Our main results follow: 

\begin{mainthm}[maximal symmetry constant]\label{thm:symmetric}
Fix $n$ such that the group $\Theta_n$ of exotic spheres is nontrivial. For every $d>0$, there exists a closed hyperbolic manifold $M^n$ and an exotic smooth structure $N$ such that $|\Isom(M)|\ge d$ and $s(N)=1$. 
\end{mainthm}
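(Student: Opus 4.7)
The plan is to produce $N$ from $M$ by a $G$-equivariant exotic modification --- simultaneous connect-sum with an exotic sphere $\Sigma\in\Theta_n$ along a free orbit of $G=\Isom(M)$ --- so that $G$ acts smoothly on $N$ and, after averaging any Riemannian metric, by isometries of an invariant one. The large isometry group is supplied by the Belolipetsky--Lubotzky method, and the exotic topology by $\Theta_n$.

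\textbf{Construction.} Given $d$, first pick a finite group $G$ with $|G|\ge d$ and $\gcd(|G|,|\Theta_n|)=1$ (always possible, e.g.\ by enlarging $G$ by a cyclic factor of prime order coprime to $|\Theta_n|$). Belolipetsky--Lubotzky then supplies a closed hyperbolic $n$-manifold $M$ with $\Isom(M)\cong G$. Choose $p\in M$ with trivial $G$-stabilizer (possible since the fixed sets of nontrivial $g\in G$ have positive codimension), let $\mathcal{O}=G\cdot p$, and pick a small geodesic ball $B_p$ around $p$ so that the translates $B_g=g(B_p)$ are pairwise disjoint. Fix a nontrivial $\Sigma\in\Theta_n$ and form
\[
N \;=\; \Bigl(M\setminus \bigsqcup_{g\in G} B_g\Bigr)\;\cup\;\bigsqcup_{g\in G}\Sigma^{\circ}_g,
\]
where each $\Sigma^\circ_g$ is a punctured copy of $\Sigma$ and the gluings $\partial B_g\to\partial\Sigma^\circ_g$ are $G$-equivariantly transported from a single gluing at $p$. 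Then $N$ inherits a smooth $G$-action extending the original action on $M\setminus\bigsqcup B_g$, and as a smooth manifold $N\cong M\,\#\,(|G|\Sigma)$, where $|G|\Sigma$ denotes the $|G|$-fold connect sum of $\Sigma$ with itself. Averaging any metric gives a $G$-invariant $\rho$ on $N$, so $|\Isom(N,\rho)|\ge |G|=|\Isom(M)|$; combined with the upper bound in \eqref{eqn:easy-bound} this forces $s(N)=1$.

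\textbf{Main obstacle.} The genuinely hard step is to verify that $N$ is not diffeomorphic to $M$. Since $N\cong M\,\#\,(|G|\Sigma)$, this reduces to ensuring $|G|\Sigma\notin I(M)$, where $I(M)\subseteq\Theta_n$ is the inertia group of $M$. Because $\gcd(|G|,|\Theta_n|)=1$, multiplication by $|G|$ is an automorphism of $\Theta_n$, so $|G|\Sigma$ sweeps out all of $\Theta_n$ as $\Sigma$ varies; it therefore suffices to arrange $I(M)\ne\Theta_n$. This is where smoothing theory enters: the hard part will be to refine the Belolipetsky--Lubotzky construction so that the manifold $M$ with $\Isom(M)\cong G$ simultaneously carries a detectable $\Theta_n$-valued smoothing obstruction --- for instance, by realizing $M$ as an equivariant modification of, or cover related to, a hyperbolic manifold of Farrell--Jones type whose inertia quotient is known to be nontrivial, and tracking that the relevant smoothing class is preserved under the arithmetic construction. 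Simultaneously achieving $\Isom(M)\cong G$ and $I(M)\ne\Theta_n$ is the crux of the proof.
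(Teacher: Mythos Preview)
Your strategy matches the paper's exactly: use Belolipetsky--Lubotzky to produce $M$ with $\Isom(M)=\Isom^+(M)\simeq G$ of order coprime to $|\Si|$, write a nontrivial $\Si\in\Theta_n$ as $|G|\cdot\Si'$, and let $G$ act on $N=M\#\Si\cong M\#\Si'\#\cd\#\Si'$ by permuting the summands along a free orbit of balls. This is precisely Theorem~\ref{thm:diff-split}. You have also correctly isolated the one substantive point that remains: showing $N$ is not diffeomorphic to $M$, i.e.\ that $\Si\notin I(M)$.

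Where your proposal stays vague (``realizing $M$ as an equivariant modification of, or cover related to, a hyperbolic manifold of Farrell--Jones type''), the paper's mechanism is concrete: \emph{stable parallelizability}. Farrell--Jones \cite{FJ-exotic-hyperbolic} show that a stably parallelizable closed hyperbolic manifold has trivial inertia group, so it suffices to arrange $\Isom(M)\simeq G$ and $M$ stably parallelizable simultaneously. This is done in Theorem~\ref{thm:BL+}(b) by grafting a Deligne--Sullivan argument onto the Belolipetsky--Lubotzky construction: one imposes congruence conditions at \emph{two} split primes $\mf p,\mf q$ so that $\pi<\Ga(\mf p)\cap\Ga(\mf q)$; then the flat $\SO_{n+1}(\C)$-bundle over $M$ (or over a cover of degree $\ell\in\{1,2,4\}$) is trivial, yielding a tangential map to $S^n$ and hence stable parallelizability. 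For $n$ even one gets $\ell=1$ and $I(M)=0$; for $n$ odd one lifts to the cover and needs $\ell\Si\neq0$, which is why Theorem~\ref{thm:isom-split} takes $\Si$ of odd order. So the ``crux'' you name is resolved not by tracking an abstract smoothing class through the arithmetic, but by building stable parallelizability directly into the choice of congruence subgroup.

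One omission worth flagging: your equivariant connect-sum gives $M\#|G|\Si$ only if every $g\in G$ preserves orientation; an orientation-reversing $g$ transports the gluing to one producing $\ov\Si$, and the summands cancel in pairs (cf.\ the Remark following the proof of Theorem~\ref{thm:diff-split}). The Belolipetsky--Lubotzky examples satisfy $\Isom(M)=\Isom^+(M)$, so this is automatic here, but it should be said.
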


\begin{mainthm}[arbitrarily small symmetry constant]\label{thm:asymmetric} 
Fix $n$ such that $\Theta_{n-1}\neq0$. For every $d>1$, there exists a closed hyperbolic manifold $M^n$ and an exotic smooth structure $N$ such that $s(N)\le\fr{1}{d}$. 
\end{mainthm}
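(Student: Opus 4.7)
The plan is to reduce to a question about the $\Isom(M)$-action on smoothings of $M$, then realize the required $M$ via the Belolipetsky--Lubotzky method combined with smoothing theory. By Mostow rigidity together with Borel's identification $\Isom(M) \cong \Out(\pi)$, any $\phi \in \Isom(N,\rho)$ is homotopic to a unique $\phi^* \in \Isom(M) =: G$, giving an injection $\Isom(N,\rho) \hookrightarrow G$. The image must fix the concordance class $[N] \in \ca S(M)$, since the diffeomorphism $\phi$ of $N$ realizing $\phi^*$ witnesses that $\phi^*$ preserves $[N]$. Hence
\[
s(N) \,\le\, \fr{|\stab_G([N])|}{|G|} \,=\, \fr{1}{|G \cdot [N]|},
\]
reducing the theorem to producing $M$ and an exotic smoothing $N$ whose class has $G$-orbit of size at least $d$.

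To arrange this, I would fix a prime $p \ge d$, a prime $q$ dividing $|\Theta_{n-1}|$, and set $G = \Z/p$. The goal becomes constructing $M$ with $\Isom(M) = G$ such that the induced $G$-action on $H^{n-1}(M; \Theta_{n-1}) \cong H_1(M; \Theta_{n-1})$ (via Poincar\'e duality) contains the regular representation $(\Z/q)[G]$ as a summand; any $\alpha$ generating a free rank-one summand then has $G$-orbit of size exactly $p$. One realizes $M$ as a $p$-fold cyclic Galois cover of a closed arithmetic hyperbolic $n$-orbifold $M_0$ chosen so that $H_1(M_0; \Z/q)$ is large. By the transfer, $H_1(M; \Z/q)$ contains the induced module $\Ind_1^G(\Z/q) = (\Z/q)[G]$ as a summand, and a refinement of the Belolipetsky--Lubotzky construction ensures $\Isom(M) = G$ acting by deck transformations. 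Smoothing theory together with $\Theta_{n-1} \ne 0$ yields an exotic smoothing $N$ of $M$ whose class in $\ca S(M)$ lifts the chosen $\alpha$; by construction $|G \cdot [N]| = p \ge d$, and the reduction then gives $s(N) \le 1/d$.

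The main obstacle is the simultaneous control required in the second step: one must prescribe both $\Isom(M) = G$ and the $G$-module structure on $H_1(M; \Z/q)$. The Belolipetsky--Lubotzky construction gives the isometry group alone, while taking congruence covers yields large homology but can introduce unwanted isometries or uncontrolled $G$-actions on cohomology. Resolving this likely requires a hybrid construction: build $M_0$ with trivial isometry group and prescribed $H_1(M_0; \Z/q)$, then choose the cover $M \to M_0$ so that its deck group realizes $\Isom(M) = G$ without creating extra symmetries. A secondary subtlety in the third step is ensuring that $\alpha$ survives the higher Postnikov obstructions of $\Top/O$ to produce a genuinely non-standard smoothing on $M$.
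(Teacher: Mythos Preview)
Your reduction is correct and matches the paper's: both rest on the observation that $\im\Psi_N\subset\stab_{\Out(\pi)}([N])$ for the concordance class $[N]\in[M,\Top/O]$, so that $s(N)\le 1/|\Out(\pi)\cdot[N]|$. (The paper phrases this as: if $F\cap\im\Psi_N=1$ for some $F<\Out(\pi)$, then $s(N)\le 1/|F|$.)

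The construction, however, diverges, and your ``main obstacle'' is self-imposed. You demand $\Isom(M)=\Z/p$, but already in your own framework this is unnecessary: if $F=\Z/d$ is merely a \emph{subgroup} of $\Isom(M)$ and the $F$-orbit of $[N]$ has size $d$, then the full $\Isom(M)$-orbit has size at least $d$ and you are done. The paper exploits exactly this. It takes any closed hyperbolic $M_0$ with a surjection $\pi_1(M_0)\twoheadrightarrow F_r$ (Lubotzky), passes to a stably parallelizable finite cover, and then to a further regular $\Z/d$-cover $M\to M_0$; the deck group is the required $F<\Isom(M)$, with no control whatsoever on the rest of $\Isom(M)$. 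Belolipetsky--Lubotzky is not invoked, and the problem of simultaneously prescribing $\Isom(M)$ and the $G$-module $H_1(M;\Z/q)$ never arises.

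Two further points. First, your transfer claim does not by itself produce a free $(\Z/q)[G]$-summand in $H_1(M;\Z/q)$; the paper instead computes directly that for the kernel $F_k$ of $F_r\twoheadrightarrow\Z/d$ one has $H_1(F_k)\cong\Z\oplus\Z[\Z/d]^{r-1}$ as $\Z/d$-modules, and pulls this back along $\pi_1(M)\twoheadrightarrow F_k$. Second, rather than lifting an abstract class in $H^{n-1}(M;\Theta_{n-1})$ through the Postnikov tower of $\Top/O$ (where, as you note, there are genuine obstructions), the paper builds the smoothing concretely as $N=M_{c,\phi}$ supported on a tubular neighborhood of a geodesic $c$ chosen to hit a free generator of the $\Z[\Z/d]$-summand. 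Non-concordance of the translates $M_{\alpha^j(c),\phi}$ is then proved by a Pontryagin--Thom collapse argument using that $M$ is stably parallelizable and that $\Top/O$ is an infinite loop space; the stably-parallelizable hypothesis, which you do not mention, is essential here.
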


The hypothesis $\Theta_n\neq0$ is frequently true, e.g.\ $\Theta_{4k+3}\neq0$ for every $k\ge1$ and $\Theta_{4k+1}$ is nontrivial for any positive $k\notin\{1,3,7,15,31\}$. See \cite[\S7]{kervaire-milnor}, \cite[Appx.\ B]{milnor-stasheff}, and \cite[Thm.\ 1.3]{hill-hopkins-ravenel}. 

The problem of computing $s(N)$ is related to two different problems in the study of transformation groups: 
\begin{itemize} 
\item {\it Degree of symmetry.} The degree of symmetry $\de(W)$ of a manifold $W$ is defined as the largest dimension of a compact Lie group with a smooth, effective action on $W$ \cite{hsiang-hsiang-degree-symmetry}. 

When $W=\Si$ is an exotic sphere, computing $\de(\Si)$ is equivalent to computing the supremum \[s(\Si):=\sup_\rho \fr{\dim\Isom(\Si,\rho)}{\dim\Isom(S^n)},\]  over all Riemannian metrics $\rho$. Again there is a bound $\fr{1}{\dim\SO(n+1)}\le s(\Si)\le 1$, but the upper bound is not optimal. For example, Hsiang--Hsiang \cite{hsiang,hsiang-hsiang} prove that if $\Si\neq S^n$ has dimension $n\ge40$, then $s(\Si)<\fr{n^2+8}{4(n^2+n)}<1/4$.

When $W$ is an aspherical manifold and $\pi_1(W)$ is centerless, then $\de(W)=0$, i.e.\ $W$ does not admit a nontrivial action of a connected Lie group \cite{borel-isometry-aspherical}. In this case it's fitting to define $\de(W)$ as the largest order of a finite group that acts effectively on $W$. With this definition, for $W=N$ an exotic smooth structure on a hyperbolic manifold, $\de(N)$ is closely related to $s(N)$; see equation (\ref{eqn:nielsen-constant}) below.

\item {\it Propagating group actions} \cite{adem-davis}. 
One says that an $F$-action on $Y$ \emph{propagates across} a map $f:X\ra Y$ if there is an $F$-action on $X$ and an equivariant map $X\ra Y$ that is homotopic to $f$. In particular, for an exotic smooth structure $N$ on a hyperbolic manifold $M$, and for a subgroup $F<\Isom(M)$, one can ask whether or not the action of $F$ propagates across some homeomorphism $N\ra M$. This problem, and its relation to harmonic maps, is discussed in Farrell--Jones \cite{FJ-nielsen}. Theorems \ref{thm:symmetric} and \ref{thm:asymmetric} can be viewed as positive and negative results about propagating group actions, and give partial answers the question of \cite[pg.\ 487]{FJ-nielsen}.  
\end{itemize} 


{\it Remark.} One could consider refinements of the symmetry constant such as $s_{<0}(N)=\sup_\rho\fr{|\Isom(N,\rho)|}{|\Isom(M)|}$, where the supremum is over all metrics with sectional curvature $K<0$. In general, $s_{<0}(N)\le s(N)$, but computing $s_{<0}(N)$ is more difficult (e.g.\ it does not reduce to a Nielsen realization problem; see below). We improve upon Theorem \ref{thm:symmetric} by giving examples for which $s_{<0}(N)=s(N)=1$. 

\begin{mainthm}[maximal symmetry, achieved by negatively-curved metric]\label{thm:isom-split}
Fix $n$, and assume that either $n$ is even or $|\Theta_n|$ is not a power of $2$. Given $d>0$, there exists a closed hyperbolic manifold $M^n$ and an exotic smooth structure $N$ such that $|\Isom(M)|\ge d$ and $N$ admits a Riemannian metric $\rho$ with negative sectional curvature so that $\Isom(N,\rho)\simeq\Isom(M)$. 
\end{mainthm}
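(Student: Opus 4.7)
The plan is to follow the strategy of Theorem \ref{thm:symmetric} and upgrade the resulting Riemannian metric to a negatively curved one. Namely, I would construct $N$ by equivariant connected sum with an exotic sphere along a free orbit of $\Isom(M)=G$, and then apply an equivariant version of Ontaneda's smearing construction to endow $N$ with a $G$-invariant metric of negative sectional curvature. The upper bound $|\Isom(N,\rho)|\le |\Isom(M)|$ comes for free from Mostow rigidity together with Borel's theorem on aspherical manifolds with centerless fundamental group (as in \eqref{eqn:easy-bound}), so the real task is to produce a $G$-action that is isometric for a negatively curved metric.

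First I would select a prime $p$ and an exotic sphere $\Sigma\in\Theta_n$ of order $p$, taking $p$ odd when $n$ is odd. This is exactly where the hypothesis on $(n,\Theta_n)$ is used: when $n$ is odd, $|\Theta_n|$ has an odd prime factor by assumption, and Cauchy's theorem supplies the required $\Sigma$; when $n$ is even, any prime $p$ dividing $|\Theta_n|$ will do. Next, choose a finite group $G$ with $|G|\ge d$ and $\gcd(|G|,p)=1$ (possible since $p$ is fixed while $|G|$ ranges freely over integers coprime to $p$), and apply the Belolipetsky--Lubotzky method to produce a closed hyperbolic $M^n$ with $\Isom(M)\simeq G$, arranging additionally that $M$ has trivial inertia group in $\Theta_n$. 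Then pick a free $G$-orbit $G\cdot x\subset M$ (fixed-point sets of nontrivial isometries form a proper closed subset, so such orbits are generic), excise a $G$-invariant tubular neighborhood of $G\cdot x$---a disjoint union of $|G|$ balls permuted freely by $G$---and glue back in $|G|$ copies of $\Sigma$ with an open ball removed. The resulting smooth manifold $N$ carries a smooth extension of the isometric $G$-action on $M$, and its smooth structure differs from that of $M$ by $|G|\cdot[\Sigma]\in\Theta_n$, which is nonzero by coprimality, so $N$ is not diffeomorphic to $M$.

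Next I would apply Ontaneda's smearing construction equivariantly: since the surgery locus is $G$-symmetric and Ontaneda's procedure is local (deforming the hyperbolic metric on $M$ only within the surgery neighborhoods, keeping strict negative curvature throughout), it can be performed simultaneously and compatibly on each $G$-translate, yielding a $G$-invariant negatively curved metric $\rho$ on $N$. Then $G\hookrightarrow\Isom(N,\rho)$ by construction, while $\Isom(N,\rho)$ is finite (since $N$ is closed and negatively curved) and embeds into $\Out(\pi)$ by Borel's theorem; Mostow rigidity identifies $\Out(\pi)$ with $G=\Isom(M)$, giving $\Isom(N,\rho)\simeq G$. The main obstacle will be executing Ontaneda's construction $G$-equivariantly: averaging a negatively curved metric over $G$ does not in general remain negatively curved, so one must verify carefully that each ingredient of the smearing (choice of cut-off, metric interpolation, curvature estimates) can be arranged $G$-symmetrically while preserving strict negative sectional curvature throughout.
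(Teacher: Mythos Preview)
Your overall strategy---equivariant connected sum along a free orbit, then a negatively curved metric---matches the paper's, but several of your steps are gaps rather than routine details.

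\textbf{The ``trivial inertia group'' step is unjustified.} You write ``arranging additionally that $M$ has trivial inertia group in $\Theta_n$'' as if this were a minor tweak in Belolipetsky--Lubotzky. It is not: one does not know how to prescribe the inertia group of a hyperbolic manifold directly. What the paper does instead is arrange (via a nontrivial modification of the Belolipetsky--Lubotzky congruence construction together with Deligne--Sullivan) that $M$ admits a stably parallelizable cover $\hat M\to M$ of degree $\ell\in\{1,2,4\}$, with $\ell=1$ when $n$ is even. Stable parallelizability forces $\hat M\#\Sigma'\not\cong\hat M$ for $\Sigma'\neq 0$ by Farrell--Jones. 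For odd $n$ one then argues by lifting: if $M\#\Sigma\cong M$ then $\hat M\#\ell\Sigma\cong\hat M$, and since $\Sigma$ was chosen of odd order, $\ell\Sigma\neq 0$, a contradiction. This is precisely why the paper needs $\Sigma$ of odd order (not merely prime order coprime to $|G|$) when $n$ is odd, and why the 2-primary choice of $G$ is convenient there.

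\textbf{The negatively curved metric is Farrell--Jones, not Ontaneda, and it needs an injectivity-radius hypothesis you omit.} Ontaneda's smoothing of strict hyperbolization is a different construction and is not what is used here. The relevant input is \cite{FJ-exotic-hyperbolic}: there is $\tau_n>0$ such that if a ball of radius $r>\tau_n$ is embedded, then $M\#\Sigma$ (performed on that ball) carries a metric that equals the hyperbolic metric outside the ball and is a radially symmetric warped product inside. Your proposal never controls $\InjRad(M)$; the paper proves a separate proposition showing that congruence covers of Belolipetsky--Lubotzky manifolds have injectivity radius tending to infinity, and arranges $\InjRad(M)>|F|\cdot\tau_n$.

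\textbf{Equivariance is not the obstacle you think.} You correctly note that averaging destroys negative curvature. The paper avoids averaging entirely: once $F$ acts freely and $\InjRad(M/F)>\tau_n$, choose a ball $B$ in $M$ projecting to an embedded ball of radius $>\tau_n$ in $M/F$; then the $F$-translates of $B$ are disjoint, and one installs the \emph{same} radially symmetric Farrell--Jones metric on each translate. The $F$-action permutes these balls by isometries of the ambient hyperbolic metric, hence preserves the glued-in metric automatically. No equivariant version of any smoothing theorem is needed---the construction is equivariant by symmetry of the data.

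In short, your outline is on the right track, but the three ingredients you treat as side conditions (exoticity of $N$, large injectivity radius, and a free $F$-action) each require genuine work inside the Belolipetsky--Lubotzky framework, and together they make the equivariant negatively curved metric fall out for free rather than being the hard part.
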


If $n=4k+3$, then $|\Theta_n|$ is divisible by $2^{2k+1}-1$; see 
 \cite[Appx.\ B]{milnor-stasheff}. 

\subsection{Techniques} 
The problem of determining $s(N)$ is related to a \emph{Nielsen realization problem}, which will be our main point of view. By Borel \cite{borel-isometry-aspherical} any compact Lie group that acts effectively on $N$ is finite; furthermore, any finite subgroup of $\Diff(N)$ acts faithfully on $\pi=\pi_1(N)$. Consequently, for every $\rho$, the isometry group $\Isom(N,\rho)$ is a subgroup of $\Out(\pi)=\Aut(\pi)/\pi$. Furthermore, if $\dim M\ge3$, then $\Out(\pi)\simeq\Isom(M)$ by Mostow rigidity. This explains the upper bound in (\ref{eqn:easy-bound}). A subgroup $F< \Out(\pi)$ is said to be \emph{realized by diffeomorphisms} when can we solve the lifting problem (commonly called the Nielsen realization problem --- see e.g.\  \cite{block-weinberger} and \cite{mann-tshishiku}): 
\[\begin{xy}
(0,0)*+{\Diff(N)}="A";
(0,-15)*+{\Out(\pi)}="B";
(-25,-15)*+{F}="C";
{\ar"A";"B"}?*!/_4mm/{\Psi_N};
{\ar@{-->} "C";"A"}?*!/_3mm/{};
{\ar@{^{(}->} "C";"B"}?*!/_2mm/{};
\end{xy}\]

If $F<\Out(\pi)$ and $F\simeq\Isom(N,\rho)$ for some $\rho$, then group $F$ is a fortiori realized by diffeomorphisms. Conversely, if $F<\Out(\pi)$ is realized by diffeomorphisms, then by averaging a metric, we find $\rho$ with $F<\Isom(N,\rho)$. Therefore, 
\begin{equation}\label{eqn:nielsen-constant}s(N)=\max_F\fr{|F|}{|\Out(\pi)|},\end{equation} where the maximum is over the subgroups $F<\Out(\pi)$ that are realized by diffeomorphisms. Note that $s(N)\le\fr{|\im\Psi_N|}{|\Out(\pi)|}$. 

Farrell--Jones \cite{FJ-nielsen} studied the Nielsen realization problem for  $N=M\#\Si$, where $M^n$ is a closed, oriented hyperbolic manifold and $\Si\in\Theta_n$ is a nontrivial exotic sphere. The main result of \cite{FJ-nielsen} states that if $M$ is stably parallelizable, $2\Si\neq0$ in $\Theta_n$, and $M$ admits an orientation-reversing isometry, then $\im\Psi_N<\Out(\pi)$ has index at least 2. In particular, $s(N)\le 1/2$ for these examples.  

\vspace{.1in} 
{\bf Symmetric exotic smooth structures.} Here we discuss the main components in the proof of Theorems \ref{thm:symmetric} and \ref{thm:isom-split}. We find our examples with $s(N)=1$ among the manifolds $N=M\#\Si$ studied by Farrell--Jones. Using (\ref{eqn:nielsen-constant}), observe that $s(N)=1$ if and only if $\Out(\pi)$ is realized by diffeomorphisms of $N$. In particular, we must find examples where $\Psi_N$ is surjective. The following results refine \cite[Thm.\ 1]{FJ-nielsen}. 

\begin{thm}\label{thm:image}
Let $M^n$ be a closed, oriented hyperbolic manifold, let $\Si\in\Theta_n$ be a nontrivial exotic sphere, and let $N=M\#\Si$. Denote by $\Out^+(\pi)<\Out(\pi)$ the subgroup that acts trivially on $H_n(N)\simeq\Z$. 
\begin{enum}
\item[(a)] The image $\im\Psi_N$ contains $\Out^+(\pi)$. 
\item[(b)] Fix $\alpha\in\Out(\pi)\setminus\Out^+(\pi)$. If $2\Si=0$ in $\Theta_n$, then $\al\in\im\Psi_N$. The converse is true if $M$ is stably parallelizable. 
\end{enum} 
\end{thm}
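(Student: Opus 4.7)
The strategy is to study the action of $\Out(\pi) \cong \pi_0\Homeo(M)$ (via Mostow rigidity) on the set $\mathcal{S}(M)$ of isotopy classes of smooth structures on the topological manifold underlying $M$, and to exploit the standard equivalence that $\alpha \in \Out(\pi)$ is realized by a diffeomorphism of $N$ if and only if the action of $\alpha$ fixes the class $[N] \in \mathcal{S}(M)$. By Kervaire--Milnor smoothing theory, connected sum with homotopy spheres makes $\mathcal{S}(M)$ into a torsor for $\Theta_n/I(M)$, where $I(M) \le \Theta_n$ is the inertia group of $M$. In these coordinates, the pullback action of an orientation-preserving homeomorphism of $M$ is trivial, while the pullback of an orientation-reversing one sends $[\Si] \mapsto [-\Si]$.

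For part~(a), any $\alpha \in \Out^+(\pi)$ fixes $[N]$ automatically. To build $\tilde f \in \Diff(N)$ concretely, I would lift $\alpha$ via Mostow rigidity to an orientation-preserving isometry $f$ of $M$ and combine $f$ on the ``$M$-part'' $M \setminus \mathring D_p$ of $N = M \#_p \Si$ with a suitable self-diffeomorphism of the ``$\Si$-part'' $\Si \setminus \mathring B^n$. After first isotoping $f$ to be linear in normal coordinates around $p$, the restriction of $f$ to the connecting sphere lies in $\SO(n)$ and extends across $\Si \setminus \mathring B^n$ via an ambient diffeomorphism of $\Si$ realizing the chosen rotation near the basepoint.

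For the sufficient direction of part~(b), the hypothesis $2\Si = 0$ in $\Theta_n$ is equivalent to $\Si$ admitting an orientation-reversing self-diffeomorphism $g$. Given orientation-reversing $\alpha$, I would lift to an orientation-reversing isometry $f$ of $M$ and repeat the construction of part~(a), but with $g$ replacing the rotation on the $\Si$-side so that the orientation-reversing boundary actions match up along the connecting sphere.

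For the converse direction of~(b), suppose orientation-reversing $\alpha$ is realized by some $\tilde f \in \Diff(N)$. Since $\alpha$ acts by $-1$ on $H_n(N)$, the map $\tilde f$ reverses the orientation of $N$, so $M \# \Si \cong -(M \# \Si) = (-M) \# (-\Si)$ as oriented smooth manifolds. Because $\alpha$ lifts to an orientation-reversing self-isometry of $M$, we have $-M \cong M$, hence $M \# \Si \cong M \# (-\Si)$, giving $2\Si \in I(M)$. The remaining --- and principal technical --- step is to establish $I(M) = 0$ for stably parallelizable $M$, which is essentially the Kervaire--Milnor normal-invariant argument exploited in Farrell--Jones~\cite{FJ-nielsen}: stable parallelizability pins down a preferred smooth structure via the trivial normal invariant, forcing $\Si = 0$ whenever $M \# \Si \cong M$. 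This last input is where I expect the main obstacle to lie.
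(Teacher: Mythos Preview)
Your proposal is correct and follows essentially the same route as the paper: for (a) and the forward half of (b) you isotope the Mostow isometry to be linear near the connect-sum ball and extend over the $\Sigma$-summand (the paper goes one step further and isotopes to the \emph{identity} on a smaller ball, which avoids having to match boundary rotations), and for the converse of (b) you reduce, exactly as the paper does, to the Farrell--Jones input that a stably parallelizable hyperbolic $M$ has trivial inertia group (the paper cites \cite[\S2]{FJ-exotic-hyperbolic} for this rather than \cite{FJ-nielsen}). One small inaccuracy worth flagging: $\mathcal S(M)$ is in general only a $\Theta_n$-\emph{set}, not a torsor for $\Theta_n/I(M)$ --- it is the $\Theta_n$-orbit of $[M]$ that is identified with $\Theta_n/I(M)$ --- but since your argument only takes place within that orbit this does not affect correctness.
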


Every closed hyperbolic manifold has a finite cover that is stably parallelizable \cite[pg.\ 553]{sullivan-stably-parallelizable}. As a consequence of Theorem \ref{thm:image}, if $2\Si=0$, then $\Psi_N$ is surjective, and if $2\Si\neq0$, then $\im\Psi_N=\Out^+(\pi)$. In any case, if $M$ does not admit an orientation-reversing isometry, then $\Psi_N$ is surjective. Farrell--Jones \cite{FJ-exotic-hyperbolic} show (implicitly) that reversing orientation is an obstruction to belonging to $\im\Psi_N$ when $2\Si\neq0$. According to Theorem \ref{thm:image}, this is the only obstruction. 

Having identified $\im\Psi_N<\Out(\pi)$, we would like to know if this subgroup is realized by diffeomorphisms. 

\begin{thm}\label{thm:diff-split}
Fix $N=M\#\Si$ as in Theorem \ref{thm:image}. Set $d=|\Isom^+(M)|$ and let $m\in\N$ be the size of the largest cyclic subgroup of $\Theta_n$ that contains $\Si$. Assume that $\gcd(d,m)$ divides $\fr{m}{|\Si|}$. Then $\Out^+(\pi)$ is realized by diffeomorphisms. 
\end{thm}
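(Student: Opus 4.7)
The plan is to realize $\Out^+(\pi)$ by a smooth $F$-action on $N$, where $F := \Isom^+(M)$, via an \emph{equivariant connected sum} along a free $F$-orbit in $M$. Because $\Isom^+(M)\xrightarrow{\sim}\Out^+(\pi)$ under Mostow rigidity, any such smooth action yields the required group-theoretic lift into $\Diff(N)$.

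The first step will be to extract a ``$d$-th root'' of $\Si$ in $\Theta_n$, i.e.\ to find $\Si'\in\Theta_n$ with $d\cdot\Si' = \Si$. Let $C\leq\Theta_n$ denote the cyclic subgroup of order $m$ that contains $\Si$, and identify $C\simeq\Z/m\Z$; the image of multiplication by $d$ on $C$ is the subgroup $\gcd(d,m)\cdot\Z/m\Z$. If $\Si$ corresponds to $s\in\Z/m\Z$, then the identity $|\Si|=m/\gcd(s,m)$ forces $(m/|\Si|)\mid s$. Combined with the hypothesis $\gcd(d,m)\mid m/|\Si|$, this gives $\gcd(d,m)\mid s$, so $\Si\in d\cdot C$, and a suitable $\Si'\in C$ exists.

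Second, I will build the equivariant $F$-action. Because $F$ acts isometrically on the negatively curved manifold $M$, the fixed set of each nontrivial $g\in F$ has positive codimension, so a generic point $x\in M$ has trivial $F$-stabilizer and its orbit $\{x_1,\ldots,x_d\}$ consists of $d$ distinct points. Enclose them in pairwise disjoint geodesic balls $D_1,\ldots,D_d$ of equal radius; since $F$ acts by isometries it permutes the $D_i$ rigidly. I then excise $\mathrm{int}(D_i)$ and glue in $d$ copies $\Si'_1,\ldots,\Si'_d$ of $\Si'\setminus\mathrm{int}(D^n)$, choosing a glueing diffeomorphism on $\partial D_1$ and transporting it to the other $\partial D_i$ by the $F$-action. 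The resulting smooth $F$-manifold $N'$ is diffeomorphic (forgetting the action) to $M\#d\Si' = M\#\Si = N$. Finally, the $F$-equivariant inclusion $M\setminus\bigsqcup D_i\hookrightarrow N'$ induces an isomorphism on $\pi_1$ (as $n\geq 3$ and $\Si'\setminus\mathrm{int}(D^n)$ is simply connected), so the homomorphism $F\to\Out(\pi)$ induced by the $F$-action on $N'$ agrees with the Mostow isomorphism $\Isom^+(M)\xrightarrow{\sim}\Out^+(\pi)$, giving the desired lift.

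The main obstacle is the arithmetic first step: it is precisely the numerical hypothesis $\gcd(d,m)\mid m/|\Si|$ that permits $d$ to divide $\Si$ inside $\Theta_n$, and without such a root $\Si'$ the equivariant connected sum would produce $M\#d\Si\not\cong N$ in general. After that, the choice of a \emph{free} orbit is what makes the geometric construction routine, sidestepping the subtler issue of extending a nontrivial stabilizer action over an excised disk.
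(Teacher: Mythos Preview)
Your proof is correct and follows essentially the same approach as the paper: extract a $d$-th root $\Si'$ of $\Si$ in $\Theta_n$ using the numerical hypothesis, then perform an equivariant connected sum of $d$ copies of $\Si'$ along a free $F$-orbit of disjoint balls in $M$. You supply more detail than the paper does on the arithmetic (why $\gcd(d,m)\mid m/|\Si|$ forces $\Si\in d\cdot C$) and on why a free orbit exists, but the underlying argument is identical.
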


The assumption $\gcd(d,m)\mid \fr{m}{|\Si|}$ guarantees that $\Si\in\Theta_n$ has a $d$-th root. This condition is satisfied, for example, whenever $|\Isom^+(M)|$ and $|\Si|$ are relatively prime. 

If $\Out^+(\pi)$ is realized by diffeomorphisms of $N$, then $s(N)\ge 1/2$. By Theorems \ref{thm:image} and \ref{thm:diff-split}, if $M$ is stably parallelizable and $2\Si\neq0$, then $s(M\#\Si)$ is equal to $1/2$ or $1$, according to whether or not $M$ admits an orientation-reversing isometry. This completely solves the Nielsen realization problem in these cases.

Theorem \ref{thm:symmetric} reduces to Theorem \ref{thm:diff-split}. Fixing $\Si\neq S^n$, it's possible to find $M$ so that $|\Isom^+(M)|$ and $|\Si|$ are relatively prime, and $|\Isom^+(M)|$ can be made arbitrarily large. This is a consequence of a result of Belolipetsky--Lubotzky \cite{belolipetsky-lubotzky}: for any finite group $F$, there exists a closed hyperbolic $M^n$ with $\Isom(M)=F$. For their examples $\Isom(M)=\Isom^+(M)$. In particular, one can find examples where $\Psi_N:\Diff(N)\ra\Out(\pi)$ is a split surjection with $|\Out(\pi)|$ arbitrarily large.


To prove Theorem \ref{thm:isom-split}, one would like to promote the action of $\Out^+(\pi)$ on $N=M\#\Si$ produced in Theorem \ref{thm:diff-split} to an action by isometries with respect to some negatively curved metric on $N$. Using a warped-metric construction of Farrell--Jones \cite{FJ-exotic-hyperbolic}, it suffices to find an $M$ that is stably parallelizable, has large injectivity radius, and such that $\Isom^+(M)$ acts freely on $M$. Arranging all of these conditions simultaneously becomes delicate, especially arranging that $M$ is stably parallelizable (which is desired because it guarantees that $M\#\Si$ is not diffeomorphic to $M$). Because of this difficulty we take a less direct approach when $\dim M$ is odd --- see Theorem \ref{thm:BL+}.


%


{\bf Asymmetric exotic smooth structures.} We explain the main ideas for proving Theorem \ref{thm:asymmetric}. For this, we consider exotic smooth structures $N=M_{c,\phi}$ obtained by removing a tubular neighborhood $S^1\times D^{n-1}\hra M$ of a geodesic $c\sbs M$ and gluing in $S^1\ti D^{n-1}$ by a diffeomorphism $\bbm1\ti\phi$ of $S^1\times S^{n-2}$, where $\phi\in\Diff(S^{n-2})$ is not isotopic to the identity. Farrell--Jones \cite{FJ-nonuniform} prove that $M_{c,\phi}$ is often an exotic smooth structure on $M$. 

The strategy for proving Theorem \ref{thm:asymmetric} is to find $N=M_{c,\phi}$ and $F\simeq\Z/d\Z$ in $\Out(\pi)$ so that $\im\Psi_N\cap F=1$. 
This condition implies that the index of $\im\Psi_N<\Out(\pi)$ is at least $|F|$, so $s(N)\le \fr{1}{|F|}$. To show $F\cap\im\Psi_N=1$, we study how the smooth structure on $M_{c,\phi}$ changes if we choose a different geodesic $c$. This is complementary to \cite[Thm.\ 1.1]{FJ-nonuniform}, which studies how the smooth structure changes when the geodesic is fixed and the isotopy class $[\phi]\in\pi_0\Diff(S^{n-2})\simeq\Theta_{n-1}$ is changed. In Theorem \ref{thm:non-concordant} we give a criterion to guarantees that $M_{c_1,\phi}$ and $M_{c_2,\phi}$ are not \emph{concordant}, i.e.\ there is no smooth structure on $M\times[0,1]$ that restricts to $M_{c_1,\phi}\sqcup M_{c_2,\phi}$ on the boundary. This is one of the main technical ingredients in the proof of Theorem \ref{thm:asymmetric}.


The proof of Theorem \ref{thm:asymmetric} works equally well when $M$ is nonuniform, but we won't discuss this further. 

Theorem \ref{thm:asymmetric} proves that $s(N)$ may be arbitrarily close to 0, as $N$ varies over exotic smooth structures on all hyperbolic $n$-manifolds (when $\Theta_{n-1}\neq0$), but if we fix the homeomorphism type, we know that $s(N)\ge\fr{1}{|\Isom(M)|}$. It would be interesting to know if there are examples where this lower bound is achieved. Of course if $\Isom(M)=1$, then $s(N)=1=\fr{1}{|\Isom(M)|}$, so to make this interesting one should ask for examples such that $\Isom(M)$ is large.  

\begin{qu}
Does there exist $n$ so that for every $d>0$, there exists a hyperbolic manifold $M^n$ and an exotic smooth structure $N$ such that $|\Isom(M)|\ge d$ and $s(N)=\fr{1}{|\Isom(M)|}$? 
\end{qu}

Note that $s(N)=\fr{1}{|\Isom(M)|}$ if and only if $\Psi_N:\Diff(N)\ra\Out(\pi)$ is trivial. Equivalently, $\Isom(N,\rho)=1$ for every Riemannian metric $\rho$. 

{\it Section outline.} In \S\ref{sec:proofs1} we prove Theorems \ref{thm:image} and \ref{thm:diff-split} and discuss some related questions of interest. In \S\ref{sec:isom-split} we discuss the work of Belolipetsky--Lubotzky and use it to prove Theorem \ref{thm:isom-split}. Finally, in \S\ref{sec:asymmetric} we prove Theorem \ref{thm:asymmetric}; specifically, we study when two smooth structures $M_{c_1,\phi}$ and $M_{c_2,\phi}$ are concordant, which we use as an obstruction to Nielsen realization.

{\it Acknowledgements.} The authors would like to thank I.\ Belegradek and S.\ Cappell for helpful and interesting conversations. M.B. has 
been supported by the Special Priority Program SPP 2026 ``Geometry at Infinity” funded by the Deutsche Forschungsgemeinschaft (DFG).

\section{Symmetry constant for $N=M\#\Si$}\label{sec:proofs1}

In this section we prove Theorems \ref{thm:image} and \ref{thm:diff-split}. 

\subsection{The image of $\Psi_N:\Diff(N)\ra\Out(\pi)$}\label{sec:image}

\begin{proof}[Proof of Theorem \ref{thm:image}] 

Let $N=M\#\Si$ as in the theorem. It will be convenient to fix $p\in M$ and a small metric ball $B=B_r(p)$ where the connected sum is performed. 

First we prove (a). For this we fix $\al\in\Out^+(\pi)\simeq\Isom^+(M)$ and define $f\in\Diff(N)$ so that $\Psi_N(f)=\al$. View $\al$ as an isometry of $M$, and choose an isotopy $\al_t\in\Diff(M)$ so that $\al_0=\al$ and $\al_1(B)=B$ and $\al_1\rest{}{B}\in O(n)$ is an isometry of the ball; for example, if the radius $r$ is sufficiently small, then we can isotope $\al(B)$ to $B$ in $M$ through isometric embeddings, and then extend the isotopy of $B$ to an ambient isotopy. Since $\al$ is orientation-preserving, $\al_1\rest{}{B}$ belongs to the identity component $SO(n)\sbs O(n)$, and it is easy to see then that $\al_1$ induces a diffeomorphism $f:N\ra N$; for example, isotope $\al_1\rest{}{B}$ further so that $\al_1\rest{}{B_{r/2}(p)}$ is the identity and perform the connected sum along $B_{r/2}(p)$ instead of $B_r(p)$. This proves part (1). 

To prove (b), assume that $\alpha\in\Out(\pi)\setminus\Out^+(\pi)$. Viewing $\alpha$ as an orientation-reversing isometry of $M$, the argument above defines an orientation-reversing diffeomorphism $h:M\#\Si\ra M\#\ov \Si$ that induces $\alpha$ (recall that for $A\#B$, if the identification of the attaching disk is changed by an orientation-reversing involution, then the result is $A\#\ov B$, where $\ov B$ is $B$ with the opposite orientation). If $2\Si=0$ in $\Theta_n$, then $\Si=\ov\Si$ (because $\ov\Si=-\Si$ in $\Theta_n$), so $h\in\Diff(N)$ and $\Psi_N(h)=\alpha$. This proves the first statement of (b). The converse is already to contained in \cite[Thm.\ 1]{FJ-nielsen}. In short, if $\Psi_N(f)=\alpha$ for some $f\in\Diff(N)$, then $h\circ f$ is an orientation-preserving diffeomorphism $M\#\Si\ra M\#\ov\Si$. When $M$ is stably parallelizable, this implies that $2\Si=0$ by \cite[\S2]{FJ-exotic-hyperbolic}.
\end{proof}

\subsection{Sections of $\Psi_N:\Diff(N)\ra\im\Psi$}

\begin{proof}[Proof of Theorem \ref{thm:diff-split}]
Since $M$ is hyperbolic, $\Out(\pi)$ is realized by isometries of $M$ (by Mostow rigidity). Set $F=\Isom^+(M)$. Since $F$ is finite, there exists $p\in M$ whose stabilizer in $F$ is trivial. Choose a ball $B$ around $p$ whose $F$-translates are disjoint. By assumption, $\gcd(|F|,m)$ divides $\fr{m}{|\Si|}$, which implies that there exists $\Si'\in\Theta_n$ so that $\Si=|F|\cdot\Si'$. Then $N=M\#\Si$ is diffeomorphic to $M\#\Si'\#\cd\#\Si'$, where $\Si'$ appears $|F|$ times. If we form the connected sum along the union of balls $F.B$, then we can extend the action of $F$ on $M\setminus F.B$ to a smooth $F$-action on $N=M\#\Si'\#\cd\#\Si'$ by rigidly permuting the exotic spheres. 
\end{proof}

{\it Remark.} One might think that the above argument could be used to define an action of $\Out(\pi)$ on $N$ under a similar constraint on $|\Out(\pi)|$ and $|\Si|$. This would contradict the fact that $\Psi_N$ is frequently not surjective when $M$ admits an orientation-reversing isometry. In the argument above, when $M$ admits an orientation-reversing isometry, one obtains an action of $\Out(\pi)$ on $M\# k\Si'\# k\ov{\Si'}$, where $k=|\Out(\pi)|/2$. But $M\# k\Si'\# k\ov{\Si'}$ is diffeomorphic to $M$, not $N$.



It would be interesting to know if $\Out^+(\pi)$ ever acts on $N=M\#\Si$ when $N$ has no ``obvious" symmetry:

\begin{qu}\label{q:no-obvious}
Is Theorem \ref{thm:diff-split} ever true without the assumption $\gcd(d,m)\mid \fr{m}{|\Si|}$? For example, fix $\al\in\Isom^+(M)$ of order $d$, and assume that $\al$ acts freely. Choose $\Si\in\Theta_n$ that does not admit a $d$-th root. Prove or disprove that the subgroup $\pair{\al}\simeq\Z/d\Z$ in $\Out^+(\pi)$ is realized by diffeomorphisms of $N=M\#\Si$. 
\end{qu}

In this direction, it would be interesting to know how the choice of $\Si$ affects the answer to Question \ref{q:no-obvious}. For instance, in the study of the symmetry constant of $\Si\in\Theta_n$, there is a marked difference between (1) the standard sphere $\Si=S^n$, (2) the nontrivial exotic spheres that bound a parallelizable manifold $\Si\in bP_{n+1}\setminus\{S^n\}$, and (3) the remaining exotic spheres $\Si\in\Theta_n\setminus bP_{n+1}$. See \cite{hsiang-hsiang-degree-symmetry}. Does this distinction play a role in Question \ref{q:no-obvious}? 

Note that the subtlety in Question \ref{q:no-obvious} disappears in the topological category: if $W$ is an aspherical manifold with $\pi_1(W)\simeq\pi$, then $\Homeo(W)\ra\Out(\pi)$ is a split surjection because $W$ and $M$ are homeomorphic by the solution of Farrell--Jones to the Borel conjecture in this case; see \cite[Cor.\ 3 in \S5]{farrell-trieste}.  

We mention another problem related to Question \ref{q:no-obvious}. For this, let $W^n$ be an exotic smooth structure on the torus $T^n$. There is a surjective homomorphism $\Diff^+(W)\ra\Out^+(\pi_1(W))\simeq \Sl_n(\Z)$, and whether or not this homomorphism splits is unknown. One approach to this question is to focus on maximal abelian subgroups of $\Sl_n(\Z)$ and try to use the dynamics of Anosov diffeomorphisms; see \cite[Question 1.4]{fks-anosov} and also \cite{brhw}. Alternatively, an obstruction to realizing finite subgroups $F<\Sl_n(\Z)$ as in Question \ref{q:no-obvious} could provide an approach to the splitting problem for certain $W=T^n\#\Si$.

\section{Realization by isometries}\label{sec:isom-split}

In this section, we prove Theorem \ref{thm:isom-split}. The starting point of our argument is the following result from \cite[Thm.\ 1.1 and \S6.3]{belolipetsky-lubotzky}. 

\begin{thm}[Belolipetsky--Lubotzky]\label{thm:BL}
For every $n\ge2$ and every finite group $F$, there exists infinitely many compact $n$-dimensional hyperbolic manifolds $M$ with $\Isom(M)=\Isom^+(M)\simeq F$. 
\end{thm}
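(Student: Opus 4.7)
The plan is to realize $M$ as a finite cover of a compact arithmetic hyperbolic $n$-orbifold whose orbifold symmetry group contains $F$. The argument has three main steps, the last of which is the decisive one.

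First I would build a cocompact arithmetic lattice $\wtil\Lam<SO(n,1)$ of the form $\wtil\Lam=F\cdot\Lam$, where $\Lam\triangleleft\wtil\Lam$ is an arithmetic sublattice with $\wtil\Lam/\Lam\simeq F$. For $n\ge 4$ one may take $\Lam$ as a subgroup of the $\ca O_K$-points of the special orthogonal group of a quadratic form $q$ of signature $(n,1)$ over a totally real number field $K$, with $F$ realized either as an outer symmetry group of the form or via Galois symmetries of the defining field (together with an inner/diagonal contribution); for $n=2,3$, analogous constructions use orders in quaternion algebras. Crucially, $F$ need not embed in the compact point-stabilizer $O(n)\sbs O(n,1)$: lifts of $F$-elements to $\Isom(\hy^n)$ can be hyperbolic, which is what allows arbitrary $F$ to appear rather than only finite subgroups of $O(n)$.

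Next I would pass to a deep congruence sub-cover. For all but finitely many prime ideals $I\sbs\ca O_K$, the principal congruence subgroup $\Gamma:=\Lam(I)$ is torsion-free, normal in $\wtil\Lam$, and intersects $F$ trivially. Setting $M_I=\hy^n/\Gamma$ gives a compact hyperbolic $n$-manifold on which $F$ acts through the deck group $\wtil\Lam/\Gamma$ (which contains $F$ as a subgroup). Distinct primes $I$ yield infinitely many pairwise non-isometric $M_I$, distinguished by their volumes.

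The hard part is to show that $\Isom(M_I)$ is \emph{exactly} $F$ and not larger. By Mostow rigidity and Margulis's arithmeticity theorem,
\[\Isom(M_I) \;=\; N_{SO(n,1)}(\Gamma)/\Gamma \;\sbse\; \Comm_{SO(n,1)}(\Lam)/\Gamma,\]
and the commensurator on the right is an explicit arithmetic group. The strategy is a strong-approximation argument: any $g$ normalizing $\Gamma$ must reduce mod $I$ to a normalizer of the image of $\Lam$ in the finite group of Lie type $SO(q)(\ca O_K/I)$, and for primes $I$ of sufficiently large residue characteristic the outer-automorphism group of this finite group can be controlled precisely enough to force $N_{SO(n,1)}(\Gamma)=F\cdot\Gamma$. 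Pinning down this normalizer --- in effect, tracking how the outer automorphisms of the mod-$I$ reductions vary with $I$, and excluding any ``stray'' normalizing elements coming from the full commensurator --- is the heart of the argument and the main technical obstacle.

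Finally, to arrange $\Isom(M_I)=\Isom^+(M_I)$, I would work with $SO(n,1)$ rather than $O(n,1)$ throughout and, if necessary, replace $M_I$ by its orientation double cover, which preserves the $F$-action and doubles the volume without enlarging the isometry group.
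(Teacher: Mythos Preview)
Your outline has two genuine gaps, and both stem from the decision to work with \emph{arithmetic} lattices.

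First, Step~1 does not produce an arbitrary finite group $F$. Symmetries of a quadratic form and Galois symmetries of its field of definition yield only a very restricted class of finite groups; there is no mechanism here that outputs, say, a nonabelian simple $F$ as $\wtil\Lam/\Lam$ with both $\Lam$ and $\wtil\Lam$ arithmetic in the way you describe. Second, and more seriously, Step~3 is not workable as written. By Margulis's theorem the commensurator of an arithmetic $\Lam<\SO(n,1)$ is \emph{dense} in $\SO(n,1)$, not ``an explicit arithmetic group,'' so the containment $N_{\SO(n,1)}(\Ga)\sbs\Comm_{\SO(n,1)}(\Lam)$ gives no usable ambient discrete group in which to compute the normalizer. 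Concretely, for a principal congruence subgroup $\Ga=\Lam(I)$ the normalizer already contains all of $\Lam$ (and typically the maximal arithmetic lattice above it), so $N_{\SO(n,1)}(\Ga)/\Ga$ is a priori much larger than $F$, and your mod-$I$ outer-automorphism argument does not exclude these extra elements. Finally, the last paragraph is off: $M_I$ is already orientable since $\Ga<\SO(n,1)$, so there is no orientation double cover to pass to, and such a passage would not in any case kill orientation-reversing isometries.

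The paper (following Belolipetsky--Lubotzky) avoids both problems by taking $\Lam$ to be a \emph{non-arithmetic} Gromov--Piatetski-Shapiro lattice. Then $\Ga:=\Comm(\Lam)$ is itself a lattice --- indeed a maximal discrete subgroup of $\Isom(\hy^n)$ --- so $\Isom(\hy^n/\pi)\simeq N_\Ga(\pi)/\pi$ for every $\pi<\Ga$, and the whole problem becomes a computation inside the single discrete group $\Ga$. The amalgamated-product structure of the GPS lattice, together with strong approximation at a congruence level, gives a normal $\De\lhd\Ga$ that surjects onto a nonabelian free group $F_r$; an arbitrary finite $F$ then enters simply as a quotient of $F_r$, and a purely group-theoretic argument inside $\Ga$ produces $\pi$ with $N_\Ga(\pi)/\pi\simeq F$. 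Both of these ideas --- the non-arithmetic choice making the commensurator discrete, and the free-group surjection producing arbitrary $F$ --- are absent from your plan and are exactly what is needed to close the gaps above.
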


The main result we prove here is as follows. 

\begin{thm}\label{thm:BL+}
Fix a finite group $F$ and fix $R>0$. Among the hyperbolic manifolds $M^n$ with $\Isom(M)=\Isom^+(M)\simeq F$, there exists $M$ such that
\begin{enum}
\item[(a)] the group $F$ acts freely on $M$, 
\item[(b)] there is a cover $\what M\ra M$ of degree $\ell\in\{1,2,4\}$ so that $\what M$ is stably parallelizable, and
\item[(c)] $\InjRad(M)>R$. 
\end{enum} 
Furthermore, for (b), if $n$ is even, then we can take $\ell=1$.
\end{thm}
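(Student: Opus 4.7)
My plan combines Belolipetsky--Lubotzky's Theorem \ref{thm:BL} with an arithmetic tower argument to achieve (a), (c), and $\Isom(M)\simeq F$ simultaneously, and then applies a Deligne--Sullivan-type trivialization to produce the cover $\what M\to M$ satisfying (b).

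\textbf{Arithmetic tower and conditions (a), (c).} By Theorem \ref{thm:BL}, fix a Belolipetsky--Lubotzky manifold $M_0=\hy^n/\Ga_0$ with $\Isom(M_0)\simeq F$. The construction is arithmetic: $\Ga_0$ is normal in an arithmetic lattice $\Lam<\Isom(\hy^n)$ with $\Lam/\Ga_0\simeq F$, and $\Lam$ carries a cofinal tower of principal congruence subgroups $\Lam(I_k)$, each normal in $\Lam$. The subgroups $\Ga_k:=\Ga_0\cap\Lam(I_k)$ are therefore $F$-invariant and yield an $F$-equivariant tower of covers $M_k\to M_0$. The Belolipetsky--Lubotzky analysis furnishes infinitely many $M_k$ in this tower with $\Isom(M_k)\simeq F$. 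For $I_k$ sufficiently deep, $\InjRad(M_k)>R$ (the systole of congruence covers tends to infinity with the level), and each nontrivial $f\in F$ acts freely on $M_k$ (its fixed locus on $M_k$ is governed by a congruence condition modulo $I_k$ that can be arranged to fail for all nontrivial $f$ simultaneously). Setting $M:=M_k$ for such a $k$ secures (a), (c), and $\Isom(M)\simeq F$.

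\textbf{Producing the cover $\what M$.} The cover $\what M\to M$ in (b) need not be $F$-equivariant. Since $TM\ot\bb C$ is flat with holonomy in $\SO(n,\bb C)$, the Deligne--Sullivan theorem on flat complex vector bundles gives a finite cover of $M$ on which $TM\ot\bb C$ becomes trivial. Passing from a complex trivialization to a real stable trivialization, and then enforcing compatibility with the orientation and spin structure, each potentially costs a factor of $2$, yielding a stably parallelizable $\what M\to M$ of degree $\ell\in\{1,2,4\}$. For even $n$, the Pontryagin classes of $TM$ already vanish rationally (by Hirzebruch proportionality applied to the compact dual $S^n$, which is stably parallelizable), and the integral obstructions can be arranged to vanish on $M$ itself by going deep enough in the arithmetic tower; thus $\ell=1$ suffices.

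\textbf{Main obstacle.} The delicate point is the degree bound in (b): proving $\ell\in\{1,2,4\}$ rather than merely finite, and sharpening to $\ell=1$ when $n$ is even. This requires a careful analysis of how the Deligne--Sullivan cover interacts with the real structure, orientation, and spin structure, and in the even-dimensional case, verifying that the arithmetic manifold produced in the first step can be chosen to already be stably parallelizable (not just a finite cover of it). By contrast, securing (a), (c), and $\Isom(M)\simeq F$ in tandem is essentially a packaging of standard facts about congruence covers, once Theorem \ref{thm:BL} is in hand.
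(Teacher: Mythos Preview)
Your sketch misidentifies both the source of the degree bound in (b) and the mechanism for the even-dimensional sharpening, and this is a genuine gap rather than a detail to be filled in.

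First, a factual correction: the Belolipetsky--Lubotzky lattices are \emph{not} arithmetic; they are subgroups of $\Comm(\Lambda)$ for a non-arithmetic Gromov--Piatetski-Shapiro lattice $\Lambda$. The point is that $\Lambda$ (and hence $\Gamma=\Comm(\Lambda)$) still sits inside $G(\ca O_S)$ for an orthogonal group $G$, so congruence subgroups $\Gamma(\mf p)=\ker\ov\al_{\mf p}$ make sense. Your framing ``$\Ga_0$ normal in an arithmetic lattice $\Lambda$ with $\Lambda/\Ga_0\simeq F$'' is not how the construction works; rather, one produces $\pi<D<\Ga$ with $N_\Ga(\pi)/\pi\simeq F$, and part (a) follows because the entire normalizer $N_\Ga(\pi)$ already lies in a torsion-free congruence subgroup $\Ga(\mf p)$.

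The real problem is your account of (b). Deligne--Sullivan does not say that a single congruence cover trivializes the flat $\SO_{n+1}(\C)$-bundle; it requires the holonomy to lie in the kernel of reduction modulo \emph{two} distinct primes. So one must first arrange $\pi<\Ga(\mf p)\cap\Ga(\mf q)$ while preserving $N_\Ga(\pi)/\pi\simeq F$. This is not automatic from passing to a congruence cover of a fixed BL manifold (the isometry group can grow); it requires re-running the BL argument with $\De=\Lambda\cap\Ga(\mf p)\cap\Ga(\mf q)$ and checking that the relevant simplicity properties hold for $Q_p\times Q_q$ in place of $Q_p$. Once $\pi<\Ga(\mf p)\cap\Ga(\mf q)$, one takes $\what\pi=\pi\cap\ker\al_{\mf p}\cap\ker\al_{\mf q}$; then Deligne--Sullivan trivializes $\what M\to B\SO_{n+1}(\C)$, and Okun's argument shows the resulting lift $\what M\to\SO_{n+1}(\C)/\SO_0(n,1)\simeq S^n$ is tangential, hence $\what M$ is stably parallelizable. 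The bound $\ell\in\{1,2,4\}$ has nothing to do with ``orientation and spin structure'': it comes from $[\ker\ov\al_{\mf p}:\ker\al_{\mf p}]\le 2$ for each of the two primes. And the even-$n$ improvement is not about Pontryagin classes or integral obstructions vanishing in a tower (neither of which would suffice); it is the elementary fact that when $n+1$ is odd, $\SO_{n+1}(p)$ has trivial center, so $\ker\al_{\mf p}=\ker\ov\al_{\mf p}$ and $\what\pi=\pi$.

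In short, your proposed route to the degree bound (complex trivialization $\Rightarrow$ real stable trivialization up to orientation/spin covers) is not a valid argument, and your even-$n$ argument via Hirzebruch proportionality does not control the actual obstructions to stable parallelizability. The paper's proof supplies the missing idea: modify the BL construction to force $\pi$ into a double congruence subgroup, then read off $\ell\mid 4$ from the index of $\ker\al$ in $\ker\ov\al$.
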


Next we deduce Theorem \ref{thm:isom-split} from Theorem \ref{thm:BL+}. 

\begin{proof}[Proof of Theorem \ref{thm:isom-split}]

Fix $d>0$. If $n$ is even, take any nontrivial $\Si\in\Theta_n$ and let $F$ be a group with $|F|\ge d$ and $\gcd(|F|,|\Si|)=1$. If $|\Theta_n|\neq 2^i$, take $\Si\in\Theta_n$ nontrivial of odd order and let $F$ be a $2$-primary group with $|F|\ge d$. In either case, there exists $\Si'\in\Theta_n$ with $\Si=|F|\cdot\Si'$. 
By Belolipetsky--Lubotzky and Theorem \ref{thm:diff-split}, for every $M$ with $\Isom(M)\simeq\Isom^+(M)\simeq F$, the group $F$ acts by diffeomorphisms of $N=M\#\Si\simeq M\#\Si'\#\cd\#\Si'$. We need to show we can choose $M$ and a negatively-curved metric $\rho$ on $N$ so that $F=\Isom(N,\rho)$ in $\Diff(N)$.

According to \cite[Prop.\ 1.3]{FJ-exotic-hyperbolic}, there is a constant $\tau_n>0$ so that if $M^n$ has injectivity radius $\InjRad(M)>\tau_n$, then $N=M\#\Si$ admits a negatively curved metric. This metric agrees with the hyperbolic metric on $M$ away from the disk where the connected sum is performed, and on that disk, the metric is radially symmetric. Choose $M$ satisfying Theorem \ref{thm:BL+} with $R=|F|\cdot\tau_n$ and such that $F$ acts freely on $M$, so the quotient $\ov{M}=M/F$ is a hyperbolic manifold. Furthermore, 
\begin{equation}\label{eqn:injrad-cover}\InjRad(\ov M)\ge \InjRad(M)/|F|>\tau_n.\end{equation} 
We prove this below. Now fix $r$ with $\tau_n<r<\InjRad(\ov M)$. From (\ref{eqn:injrad-cover}) it follows that for any ball $B=B_r(p)$ in $M$, the $F$-translates of $B$ are disjoint. Fix such a ball $B$. As in the proof of Theorem \ref{thm:diff-split}, write $\Si=|F|\cdot\Si'$ and consider $M_0=M\setminus F.B$. The manifold $N$ is obtained by gluing $\bb D^n$ to each boundary component of $M_0$ by a fixed diffeomorphism $f\in \Diff(S^{n-1})$. 
Using the technique in \cite{FJ-exotic-hyperbolic}, we give $N$ a Riemannian metric $\rho$ that agrees with the hyperbolic metric on $M_0$ and is a warped-product metric on each $\bb D^n$. Since $r>\tau_n$, \cite[\S3]{FJ-exotic-hyperbolic} guarantees that the resulting metric has negative curvature. The group $F$ acts on $N$ as in Theorem \ref{thm:diff-split}, and by construction it acts by isometries for the metric $\rho$. 

Now we explain the inequality (\ref{eqn:injrad-cover}). To see the first inequality, note that $2\InjRad(M)=\sys(M)$, where $\sys(M)$ is \emph{systole}, i.e.\ the length of the shortest geodesic. Under a $d$-fold isometric cover $M\ra\ov M$, if $\ov\ga$ is a closed geodesic of $\ov M$ and $\ga\sbs M$ is a connected component of its preimage, then $\text{length}(\ga)\le d\cdot\text{length}(\ov\ga)$. It follows that $\sys(M)\le d\cdot\sys(\ov M)$. 

It remains is to show that $N$ is not diffeomorphic to $M$. When $n$ is even, then by Theorem \ref{thm:BL+} we can assume that $M$ is stably parallelizable and so $M$ is not diffeomorphic to $M\#\Si$ by Farrell--Jones \cite{FJ-exotic-hyperbolic}. In the general case, $M$ has a stably parallelizable cover of degree 2 or 4. Suppose for a contradiction that $M\#\Si$ is diffeomorphic to $M$. Lifting to the cover $\what M\ra M$, we find that $\what M\#\ell \Si$ is diffeomorphic to $\what M$. Note that $\ell\Si\neq0$ in $\Theta_n$ since $\Si$ has odd order and $\ell\in\{2,4\}$. Since $\what M$ is stably parallelizable, by \cite[Prop.\ 1.2]{FJ-exotic-hyperbolic}, we conclude that $\what M\#\ell\Si$ is not diffeomorphic to $\what M$. This is a contradiction, so $N$ is not
diffeomorphic to $M$ as desired. This completes the proof.
\end{proof}




Next we prove Theorem \ref{thm:BL+}. Fix a finite group $F$. In what follows $M=\hy^n/\pi$ will always denote one of the Belolipetsky--Lubotzky manifolds with $\Isom(M)=\Isom^+(M)\simeq F$. We have to explain why $M$ can be chosen to satisfy (a), (b), and (c). We will see that \cite[Thm.\ 2.1]{belolipetsky-lubotzky} already shows that (a) can be arranged, and that (b) can be arranged by modifying the proof of \cite[Prop.\ 2.2]{belolipetsky-lubotzky}. Part (c) requires a different, separate argument. All of these arguments involve passing to certain congruence covers, so once we explain why (a), (b), and (c) can be arranged \emph{individually}, it will be evident that they can be arranged \emph{simultaneously}. 

\vspace{.1in} 
{\bf Recollection of Belolipetsky--Lubotzky \cite{belolipetsky-lubotzky}.} 
Here we summarize the main results of \cite{belolipetsky-lubotzky}, especially the aspects needed for our proof. Let $\Ga$ be a finitely generated group. Assume that $\Delta\lhd\Ga$ is finite-index, normal, and that $\De$ surjects to a finite-rank free group: 
\[1\ra K\ra\De\ra F_r\ra 1\] for some $r\ge2$. The conjugation action of $N_\Ga(K)$ on $\De$ preserves $K$, so $N_\Ga(K)$ acts on $F_r$ by automorphisms. Let $D<N_\Ga(K)$ be the subgroup that acts on $F_r$ by inner automorphisms. With this setup, the main algebraic construction of \cite[Thm.\ 2.1]{belolipetsky-lubotzky} asserts that for any finite group $F$, there exists a finite-index subgroup $\pi<D$ with $N_\Ga(\pi)/\pi\simeq F$ (in their notation, they use $M$ instead of $K$ and $B$ instead of $\pi$). 

In the application to hyperbolic manifolds, define $\Ga$  as the commensurator $\Comm(\Lam)$ of a Gromov--Piatetski-Shapiro \cite{gps} non-arithmetic lattice $\Lam<\SO(n,1)$. By work of Mostow and Margulis, $\Comm(\Lam)$ is a maximal discrete subgroup of $\Isom(\hy^n)$, so for any $\pi<\Ga$, 
\[N_\Ga(\pi)/\pi\simeq N_{\Isom(\hy^n)}(\pi)/\pi\simeq\Isom(\hy^n/\pi).\]
Hence to find $M=\hy^n/\pi$ with $\Isom(M)\simeq F$, it suffices to find $\pi<\Ga$ with $N_\Ga(\pi)/\pi\simeq F$. 

To define $\De$, denote $G=O(n,1)$ and let $\ca O_S$ be ring of definition of $\Ga$, so $\Ga<G(\ca O_S)$. Let $\mf p\sbs\ca O_S$ be a prime ideal and denote $p\in\N$ the prime with $(p)=\mf p\cap\Z$. We only deal with prime ideals $\mf p$ where $\ca O_S/\mf p\simeq\bb F_p$. Equivalently, $p$ splits completely in $\ca O_S$; there are infinitely many such $\mf p$ by Chebotarev's theorem. Reduction mod $\mf p$ defines a map $\alpha_{\mf p}:\Ga\ra G(\ca O_S/\mf p)\simeq O_{n+1}(p)$ to an orthogonal group over $\bb F_p$. Define $\Ga(\mf p)=\ker(\ov\al_{\mf p})$, where $\ov\al_{\mf p}:\Ga\ra O_{n+1}(p)\ra\PO_{n+1}(p)$. The group $\De$ is defined as $\Lam\cap\Ga(\mf p)$. 

To ensure $\De\lhd\Ga$, we want $\Lambda\lhd\Ga$. In order to arrange this, after we've defined $\Ga$, we replace $\Lambda$ with a finite-index subgroup (still denoted $\Lambda$) so that $\Lambda\lhd\Ga$ (note that this replacement does not change $\Comm(\Lam)$). The group $\De$ surjects to a free group: By the cut-and-paste nature of the construction of \cite{gps}, $\Lam$ is either an amalgamated product or an HNN extension. For definiteness assume $\Lam=\Lam_1*_{\Lam_3}\Lam_2$. Denoting $\Om_{n+1}(p)=[O_{n+1}(p),O_{n+1}(p)]$, by strong approximation, for all but finitely many $\mf p$, the image of $\ov\alpha_{\mf p}:\Lam\ra \PO_{n+1}(p)$ contains $Q_p:=\POm_{n+1}(p)$, and the same is true for the restriction to $\Lam_1,\Lam_2$. Without loss of generality, we may assume $\im(\ov\al_{\mf p})=Q_p$ (replace $\Lambda$ by the intersection of all index-2 subgroups of $\Lambda$). Denoting $T_p=\ov\al_p(\Lam_3)$, the map $\ov\al_p$ factors through surjective maps $\Lam\xra{s} Q_p*_{T_p}Q_p\xra{t} Q_p$. Then $\De=\ker(t\circ s)$ surjects onto $\ker t$, which is a free group of rank $r\ge2$ \cite[Prop.\ 3.4]{belolipetsky-lubotzky}. 



\vspace{.1in}
{\bf Proof of Theorem \ref{thm:BL+}.} Fix a finite group $F$. We use the setup of the proceeding paragraphs. In particular, $\pi<D$ will always denote a subgroup with $N_\Ga(\pi)/\pi\simeq F$, and our aim is to show that $\pi$ can be chosen in such a way that $M=\hy^n/\pi$ has properties (a), (b), and (c). 

{\bf Part (a).} By \cite[pg.\ 465]{belolipetsky-lubotzky} the group $N_\Ga(\pi)$ is contained in $D=\ker\big[N_\Ga(K)\ra\Out(F_r)\big]$, and \cite[\S5]{belolipetsky-lubotzky} shows that $D$ is contained in $\Ga(\mf p)$, which is torsion-free for $p$ large. It follows that $\Isom(M)\simeq N_\Ga(\pi)/\pi$ acts freely on $M$:  if $x\in M$ is fixed by $g\neq 1\in\Isom(M)$, then $g$ lifts to $\til g\in N_\Ga(\pi)$ that acts on $\hy^n$ with a fixed point, but this contradicts the fact that $N_\Ga(\pi)$ is torsion-free.  




{\bf Part (b).} As mentioned in part (a), we can arrange that $\pi<\Ga(\mf p)$. Our main task for part (b) will be to show that we can also arrange that $\pi<\Ga(\mf p)\cap\Ga(\mf q)$, where $\mf p,\mf q\sbs\ca O_S$ are prime ideas with $\ca O_S/\mf p\simeq\bb F_p$ and $\ca O_S/\mf q\simeq\bb F_q$ for distinct primes $p,q$. Before we do this, we explain why this is enough to conclude that $M=\hy^n/\pi$ has the desired stably parallelizable cover. 

Suppose that $M=\hy^n/\pi$ with $\pi<\Ga(\mf p)\cap\Ga(\mf q)$. We will show that there is a cover $\what M\ra M$ of degree 1, 2, or 4 so that $\what M$ has a tangential map $\what M\ra S^n$, and hence $\what M$ is stably parallelizable. The group $\pi$ is a subgroup of the identity component $\SO_0(n,1)<\SO(n,1)$. The inclusions $\pi\hra\SO_0(n,1)\hra\SO_{n+1}(\C)$ define flat bundles over $M$. By Deligne--Sullivan \cite{deligne-sullivan}, there is a particular cover $\what M\ra M$ so that the map $\what M\ra M\ra B\SO_{n+1}(\C)$ is homotopically trivial. This cover is the one corresponding to the subgroup $\what\pi=\pi\cap\ker(\al_{\mf p})\cap\ker(\al_{\mf q})$ of $\pi$. Note that the index $[\pi:\what\pi]$ is 1, 2, or 4 because $\ker(\al_{\mf p})$ has index 2 in $\ker(\ov\al_{\mf p})$. Furthermore, if $n$ is even, then $\SO_{n+1}(p)< O_{n+1}(p)$ has trivial center, so $\SO_{n+1}(p)\simeq\PSO_{n+1}(p)$, which implies that $\what\pi=\pi$. 

Since there is a fibration 
\[\SO_{n+1}(\C)/\SO_0(n,1)\ra B\SO_0(n,1)\ra B\SO_{n+1}(\C)\]
and $\what M\ra B\SO_0(n,1)\ra B\SO_{n+1}(\C)$ is trivial, the map $\what M\ra B\SO_0(n,1)$ lifts to $\SO_{n+1}(\C)/\SO_0(n,1)$, which is homotopy equivalent to $\SO(n+1)/\SO(n)\simeq S^n$. This map $\what M\ra S^n$ is a tangential map by Okun \cite[\S5]{okun}. This completes the construction of the stably parallelizable cover.


Now we show we can find $M$ with isometry group $F$ and fundamental group $\pi<\Ga(\mf p)\cap\Ga(\mf q)$. As above, fix $\mf p\sbs\ca O_S$ such that $\alpha_p:\Lambda\ra Q_p$ is surjective and also $\alpha(\Lam_1)=\alpha(\Lam_2)=Q_p$.  

{\it Observation.} Fix a prime ideal $\mf q\sbs\ca O_S$ and denote $q\in\N$ the prime with $(q)=\mf q\cap\Z$. If the image of $\ov\al_{\mf q}:\Lam(p)\ra\PO_{n+1}(q)$ contains $Q_q$, then the image of $\ov\al_{\mf p,\mf q}: \Lambda\to \PO_{n+1}(p)\times\PO_{n+1}(q)$ defined by
\begin{equation*}
\ov\al_{\mf p,\mf q}(g)=(\ov\al_{\mf p}(g),\ov\al_{\mf q}(g))
\end{equation*}
contains $Q_p\ti Q_q$. Indeed, if $(x,y)\in Q := Q_p\times Q_q$, then one has that $\ov\al_{\mf p}(g)=x$ for some $g\in\Lambda$ and also $\ov\al_{\mf q}(h)=\ov\al_{\mf q}(g)^{-1}y$ for some $h\in\Lambda(\mf p)$. Thus $\ov\al_{\mf p,\mf q}(gh)=(x,y)$. 

We use the observation together with the strong approximation theorem to conclude that for all but finitely many of the infinitely many primes $q$ that split completely, the image of each of $\Lam$, $\Lam_1$, and $\Lam_2$ in $\PO_{n+1}(p)\ti\PO_{n+1}(q)$ contains $Q_p\ti Q_q$. As before, we may assume (by replacing $\Lam$ with a finite-index subgroup) that $\ov\al_{\mf p,\mf q}(\Lam)=Q_p\ti Q_q$. 

Set $T=\ov\al_{\mf p,\mf q}(\Lambda_3)$. The subgroup $T<Q$ has the property that there are no nontrivial $N\lhd Q$ such that $1\le N\leq T$ (compare \cite[\S3.2]{belolipetsky-lubotzky}). This holds essentially for the same reasons it holds for $T_p<Q_p$ (see \cite[\S5]{belolipetsky-lubotzky}). In our case, we only need to notice that $T\leq \PO_{n}(p)\times \PO_{n}(q)$, while the only nontrivial proper normal subgroups of $Q$ are $Q_p\times 1$ and $1\times Q_q$ (the latter fact holds because $Q_p$ and $Q_q$ are simple if $p,q$ are sufficiently large and $Q_p\not\simeq Q_q$). 

Setting $\De=\ker(\ov\al_{\mf p,\mf q})=\Lam\cap\Ga(\mf p)\cap\Ga(\mf q)$, we may repeat the argument of \cite[\S5]{belolipetsky-lubotzky} to conclude that $\pi<D$ is contained in $\Ga(\mf p)\cap\Ga(\mf q)$. This finishes part (b).

{\bf Part (c).} We explain why we can arrange for $M$ to have isometry group $F$ and arbitrarily large injectivity radius. This will follow (using Proposition \ref{prop:injrad} below) from the fact that $\pi$ is a subgroup of matrices $\Sl_m(\ca O_S)$ with coefficients in the ring $\ca O_S$ of $S$-integers in a number field $L$. Before proving Proposition \ref{prop:injrad} we recall a few facts about $\ca O_S$. Here $\ca O$ is the ring of integers in $L$, and $S$ is a finite set of places (i.e.\ an equivalence class of absolute value on $L$) that includes all of the Archimedean places, and $\ca O_S=\{x\in L: t(x)\le 1\text{ for all places }t\notin S\}$. 

For our proof of Proposition \ref{prop:injrad}, we recall the description of the set of all places of $L$. This is the content of Ostrowski's theorem \cite[Ch.\ II]{janusz}. The Archimedean places all come from embeddings of $L$ into $\R$ or $\C$. The non-Archimedean places come from prime ideals $\mf q\sbs\ca O$ as follows. Given $\mf q$, for $a\in\ca O$ define $\nu_{\mf q}(a)\in\Z_{\ge0}$ as the multiplicity of $\mf q$ appearing in the prime factorization of the ideal $(a)\sbs\ca O$; this is extended to $x=\fr{a}{b}\in L$ by $\nu_{\mf q}(x)=\nu_{\mf q}(a)-\nu_{\mf q}(b)$. Denoting the norm $N(\mf q)=|\ca O/\mf q|$, the function $t_{\mf q}(x)=N(\mf q)^{-\nu_{\mf q}(x)}$ defines a place of $L$. The set of all places (normalized in the way we have described) satisfies the \emph{product formula} $\prod t(x)=1$ for any $x\in L^\ti$ \cite[Ch.\ II, \S6]{janusz}. 
For future reference, observe that if $a\in\ca O$ and $\mf q\nmid a$, then $t_{\mf q}(a)=1$, so only finitely many terms in the product $\prod t(x)$ differ from 1. Note also that if $(a)=\mf q_1^{n_1}\cd\mf q_f^{n_f}$ is the prime factorization, then $N(a)=N(\mf q_1)^{n_1}\cd N(\mf q_f)^{n_f}$, so by the product formula, $N(a)$ is also equal to the product $\prod_{t\mid\infty} t(a)$ over Archimedean places of $L$. 

\begin{prop}[Injectivity radius growth in congruence covers]\label{prop:injrad}
Let $V$ be a closed aspherical Riemannian manifold with fundamental group $\pi$. Suppose there exists an injection 
$\pi\hra\Sl_m(\ca O_S)$, where $\ca O_S$ is the ring of $S$-integers in a number field $L$. For an ideal $\mf k\sbs\ca O$, denote 
\[\Sl_m(\mf k)=\ker\big[\Sl_m(\ca O_S)\ra\Sl_m(\ca O_S/\mf k\ca O_S)\big]\] 
and let $V_{\mf k}$ be the cover of $V$ with fundamental group $\pi(\mf k):=\pi\cap\Sl_m(\mf k)$. Then there are constants $C,D$ (depending only on $V$, $m$, and $K$, but not $\mf k$) so that $\InjRad(V_{\mf k})\ge C\log k+D$, where $(k)=\mf k\cap\Z$. 
\end{prop}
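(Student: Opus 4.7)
The plan is to give a uniform lower bound on the lengths of closed geodesics in $V_{\mf k}$ by applying the product formula in $L^{\times}$ to the matrix entries of nontrivial elements of $\pi(\mf k)$. In the locally hyperbolic case of the applications (which avoids conjugate points), $\InjRad(V_{\mf k}) = \tfrac{1}{2}\sys(V_{\mf k})$, and each closed geodesic in $V_{\mf k}$ is represented by a conjugacy class $[\gamma]$ in $\pi(\mf k) \setminus \{1\}$ whose length equals the translation length $\ell(\gamma) := \inf_{\til x \in \wtil V} d(\til x, \gamma \til x)$. So it suffices to produce constants $C, D$ such that $\ell(\gamma) \geq C \log k + D$ for every nontrivial $\gamma \in \pi(\mf k)$.

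The first step is to control, at every place $t$ of $L$, the $t$-norm of each matrix entry of $\gamma$ in terms of $\ell(\gamma)$. By the Milnor--Svarc lemma, for a fixed finite generating set of $\pi$ and basepoint $x_0$ in a compact fundamental domain $F \subset \wtil V$, there are constants $A, B$ such that the word length satisfies $|\eta|_{\text{word}} \leq A\, d(x_0, \eta x_0) + B$ for every $\eta \in \pi$. Using that $\pi(\mf k) \lhd \pi$ (since $\Sl_m(\mf k) \lhd \Sl_m(\ca O_S)$), one may replace $\gamma$ by a conjugate in $\pi(\mf k)$ (with the same translation length) whose axis intersects $F$, so that $d(x_0, \gamma x_0) \leq \ell(\gamma) + 2\diam(F)$ and hence $|\gamma|_{\text{word}} \leq A \ell(\gamma) + B'$. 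Since the generators of $\pi$ have matrix entries of bounded $t$-norm at every place, induction on word length produces constants $E_t, F_t$ such that $t(\gamma_{ij}) \leq e^{E_t \ell(\gamma) + F_t}$ for every entry of $\gamma$.

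The second step is to apply the product formula to a nonzero entry $a$ of $\gamma - I$, which by hypothesis lies in $\mf k \ca O_S$. At each non-Archimedean place $t_{\mf q}$ with $\mf q \notin S$ and $\mf q \mid \mf k$, one has $t_{\mf q}(a) \leq N(\mf q)^{-\nu_{\mf q}(\mf k)}$; at every other non-Archimedean place outside $S$, $t(a) \leq 1$. Letting $\mf k'$ denote the part of $\mf k$ coprime to $S$, this gives $\prod_{t \notin S} t(a) \leq N(\mf k')^{-1}$. Combining with the previous step via $\prod_t t(a) = 1$,
$$N(\mf k') \;\leq\; \prod_{t \in S} t(a) \;\leq\; \prod_{t \in S} e^{E_t \ell(\gamma) + F_t} \;=\; e^{E\, \ell(\gamma) + F},$$
so $\ell(\gamma) \geq E^{-1}(\log N(\mf k') - F)$. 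Since $k \in \mf k$, the inclusion $\Z/k\Z \hra \ca O/\mf k$ gives $N(\mf k) \geq k$; and when $\mf k$ is coprime to the primes in $S$ (as is the case for the ideals used in Part (b)), $N(\mf k') = N(\mf k) \geq k$, yielding $\ell(\gamma) \geq C \log k + D$ with constants depending only on $V$, $m$, and $L$.

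The main obstacle is converting the geometric quantity $\ell(\gamma)$ into arithmetic information about $\gamma$: this is handled by the Milnor--Svarc estimate, but only in combination with the conjugation trick that brings the axis of $\gamma$ into a fixed fundamental domain while preserving the property $\gamma \in \pi(\mf k)$. Once this is done, the product formula supplies the required bound in a standard way.
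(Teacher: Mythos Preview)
Your proof is correct and follows essentially the same strategy as the paper's: convert the displacement bound into a word-length bound via Milnor--\v{S}varc, bound the $S$-norms of matrix entries from above in terms of word length, and bound them from below by applying the product formula to a nonzero entry of $\gamma-I\in\mf k\,\ca O_S$. The only notable difference is that you fix one generating set and conjugate $\gamma$ (using normality of $\pi(\mf k)\lhd\pi$) so that a minimizing point lies in a fixed fundamental domain, whereas the paper instead centers its Dirichlet domain at the point $y$ realizing the injectivity radius; your explicit restriction to the nonpositively curved case and the coprimality of $\mf k$ with the $S$-primes are both harmless for the application.
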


This statement is similar to the ``Elementary Lemma" of \cite[\S3.C.6]{gromov-systole}. The proof below is based on, and has some overlap with, the argument in \cite[\S4]{guth-lubotzky}. 

\begin{proof}[Proof of Proposition \ref{prop:injrad}]
Let $\wtil V$ be the universal cover of $V$. 

Fix the ideal $\mf k$, and set $R=\InjRad(V_{\mf k})$.  By definition of $\InjRad$, there exists $y,z\in \wtil V$ and $\eta\in\pi(\mf k)$ so that $y,\eta y$ are both contained in the ball $B_{2R}(z)$. Then $d(y,\eta y)\le 4R$; equivalently 
\[R\ge\fr{1}{4} d(y,\eta y).\] To prove the proposition, we will give a lower bound on $d(y,\eta y)$. 

Since $V$ is compact, $\pi$ is finitely generated. Consider the generating set associated to the Dirichlet fundamental domain $\ca D$ centered at $y$ for the action of $\pi$ on $\wtil V$ (generators are those $g\in\pi$ for which $g(\ca D)\cap \ca D\neq\vn$). For the word length $w:\pi\ra\Z_{\ge0}$ associated to this generating set, there is a bound $w(\eta)\le c_1\cdot\big[d(y,\eta y)+1\big]$, obtained as follows. Take a geodesic $\ga$ connecting $y,\eta y$, and cover it by $\lfloor d(y,\eta y)\rfloor+1$ balls of radius 1. There is $c_1>0$ so that each ball intersects at most $c_1$ translates of $\ca D$, so $\ga$ intersects at most $c_1\cdot\big[d(y,\eta y)+1\big]$ translates of $\ca D$. This proves the aforementioned bound, which is equivalent to
\[d(y,\eta y)\ge (1/c_1)\cdot w(\eta)-1.\]

To finish the proof, we prove 
\begin{equation}\label{eqn:bound-word-length}
w(\eta)\ge c_2\log k+c_3\end{equation} for some constants $c_2,c_3$. Now we use the assumptions that $\pi<\Sl_m(\ca O_S)$ and $\eta\in\Sl_m(\mf k)$. For $X=(x_{ij})\in \Sl_m(L)$ and $s\in S$, define 
\[|X|_s=\max_{i,j} s(x_{ij})\>\>\>\text{ and }\>\>\> |X|_S=\sum_{s\in S}|X|_s.\] By the formula for matrix multiplication $|XY|_S\le m\>|X|_S|Y|_S$. Write $\eta=X_1\cd X_{w(\eta)}$ with $X_i\in\Sl_m(\ca O_S)$ belonging to our chosen generating set of $\pi$. Then  $|\eta|_S\le m^{w(\eta)-1}\cdot M^{w(\eta)}$, where $M$ is the maximum value of $|\cdot|_S$ on generators of $\pi$. On the other hand, we will show  that $|\eta|_S\ge \ell\cdot k^{1/\ell}-\ell$, where $(k)=\mf k\cap\Z$ and $\ell=|S|$. Then altogether we have 
\[\ell\cdot k^{1/\ell}-\ell\le |\eta|_S\le m^{w(\eta)-1}\cdot M^{w(\eta)},\] which gives a bound as in (\ref{eqn:bound-word-length}) after taking log. Note that $\log(k^{1/\ell}-1)=\log(k^{1/\ell})+\log(1-k^{-1/\ell})$ and $\log(1-k^{-1/\ell})$ is bounded below by the constant $\log(1-2^{-1/\ell})$. 

Now we prove $|\eta|_S\ge\ell\cdot k^{1/\ell}-\ell$. Since $\eta\neq\id$, some entry $\eta_{ij}$ has the form $1+x$ or $x$, where $x\in\mf k\ca O_S$ is nonzero. Write $x=\fr{a}{b}\cdot x_1$, where $x_1\in\mf k$ and the only primes dividing $a,b$ are primes in $S$. By the product formula 
\[\prod_{s\in S}s(a/b)=1\>\>\>\text{ and }\>\>\>\prod_{s\in S}s(x_1)=N(x_1).\] Furthermore, $N(x_1)\ge N(\mf k)\ge k$ because $(x_1)\sbs\mf k$ and $\Z/k\Z\sbs\ca O/\mf k$. Therefore, $\prod_{s\in S}s(x)\ge k$. 

Next we show that $\prod_{s\in S}s(x)\ge k$ implies that $|x|_S:=\sum_{s\in S} s(x)\ge \ell k^{1/\ell}$. This follows from some calculus: we want to minimize the function $\phi(x_1,\ld,x_\ell)=x_1+\cd+x_\ell$ under the constraint $x_1\cd x_\ell\ge k$. Since $\phi$ has no critical points, the minimum is achieved on the set $x_1\cd x_\ell=k$. Using Lagrange multipliers, one finds that $\phi$ has a unique minimum at $x=(k^{1/\ell},\ld,k^{1/\ell})$ and the minimum value is $\phi(x)=\ell\cdot k^{1/\ell}$. 

Since $\eta_{ij}$ is either $x$ or $1+x$, in either case $|\eta_{ij}|_S\ge\sum_{s\in S}[s(x)-1]\ge \ell\cdot k^{1/\ell}-\ell$. Combining everything we conclude that 
\[|\eta|_S\ge|\eta_{ij}|_S\ge\ell\cdot k^{1/\ell}-\ell.\]
This completes the proof.
\end{proof}

\section{Symmetry constant for $N=M_{c,\phi}$}\label{sec:asymmetric}

In this section we prove Theorem \ref{thm:asymmetric}. As mentioned in the introduction, the goal is to find smooth structures $N$ and large subgroups $F<\Out(\pi)$ so that $\im\Psi_N\cap F=1$. To this end, we consider the exotic smooth structures $N=M_{c,\phi}$ studied in \cite{FJ-nonuniform}. Here $M$ is hyperbolic, $c$ is a simple closed geodesic, and $\phi\in\Diff(S^{n-2})$. Choosing a framing $\iota:S^1\times D^{n-1}\ra M$ of $c$, the manifold $M_{c,\phi}$ is defined as the quotient of 
\[S^1\ti D^{n-1} \coprod M\setminus \iota(S^1\ti\text{ int}( D^{n-1}))\]
by the identification $(x,v)\leftrightarrow \iota(x,\phi(v))$ for $(x,v)\in S^1\ti S^{n-2}$. 

We prove Theorem \ref{thm:asymmetric} in 3 steps. 

\subsection{Non-concordant smooth structures (Step 1)}

Our mechanism for constructing $\al\in\Out(\pi)$ such that $\al\notin\im\Psi_N$ is Theorem \ref{thm:non-concordant} below. Before we state it, recall some facts about smooth structures that will be used here and in the next subsection.

{\bf Smoothings of topological manifolds.} By a smooth manifold $N$ we mean a topological manifold with a smooth atlas of charts $\R^n\supset U_\alpha \ra N$ (which we call a \emph{smooth structure}). If $N$ (resp.\ $M$) is a smooth (resp.\ topological) manifold and $h:N\ra M$ is a homeomorphism, then we obtain a smooth structure on $M$ by pushforward. The map $h$ is called a \emph{marking}. Two markings $h_0:N_0\ra M$ and $h_1:N_1\ra M$ determine the same smooth  structure on $M$ if there is a diffeomorphism $g:N_0\ra N_1$ so that $h_1g=h_0$.

Two smooth structures $N_0,N_1$ on $M$ are \emph{concordant} if there exists a smooth structure on $M\times[0,1]$ whose restriction to $M\times\{i\}$ is $N_i$ for $i=0,1$. The main fact about concordances that we use is that classifying concordance classes reduces to homotopy theory: there is a bijection between the set of concordance classes of smooth structures on $M$ and the set of based homotopy classes of maps $[M,\Top/O]$. 


As remarked in \cite[\S1]{FJ-nonuniform}, the concordance class of the smooth structure $M_{c,\phi}$ is independent of the choice of framing and is also independent of the choice of representative of the isotopy class $[\phi]\in\pi_0\Diff(S^{n-2})$. 

\begin{thm}[non-concordant smooth structures]\label{thm:non-concordant}
Let $M$ be a smooth closed manifold. Assume $M$ is stably parallelizable. Let $c_1,\ld,c_\ell$ be disjoint closed curves in $M$. Assume that there exists a homomorphism $\De:\pi_1(M)\ra\Z^\ell$ such that $\De(c_1),\ld,\De(c_\ell)$ generate $\Z^\ell$. For any nontrivial isotopy class $[\phi]\in\pi_0\Diff(S^{n-2})$, no two of the smooth structures $M_{c_1,\phi},\ldots,M_{c_\ell,\phi}$ are concordant. 
\end{thm}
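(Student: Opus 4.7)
The plan is to use the standard bijection between concordance classes of smooth structures on $M$ (relative to the given smoothing) and the homotopy set $[M,\Top/O]$: writing $\tau_i\in[M,\Top/O]$ for the class of $M_{c_i,\phi}$, the theorem reduces to the claim that $\tau_i\ne\tau_j$ whenever $i\ne j$. My strategy is to construct, from the homomorphism $\Delta$, a family of linear invariants valued in $\Theta_{n-1}$ that separates the $\tau_i$.

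First, I would identify each $\tau_i$ via Pontryagin--Thom. Stable parallelizability of $M$ trivializes the normal bundle $\nu_{c_i}$; fix a framing. Since $M$ and $M_{c_i,\phi}$ agree outside a tubular neighborhood of $c_i$, the classifying map $\tau_i\colon M\to\Top/O$ can be homotoped to be constant off $\nu(c_i)$ and therefore factors as
\[
M \xrightarrow{\;p_i\;} T(\nu_{c_i}) \simeq S^1_+\wedge S^{n-1}\simeq S^{n-1}\vee S^n \xrightarrow{\;f_\phi\;} \Top/O,
\]
where $p_i$ is the Pontryagin--Thom collapse. The key lemma to verify is that $f_\phi|_{S^{n-1}}$ represents the Gromoll clutching class $[\Sigma_\phi]\in\Theta_{n-1}\simeq\pi_{n-1}(\Top/O)$, where $\Sigma_\phi$ is the exotic sphere obtained by gluing two copies of $D^{n-1}$ along their common boundary via $\phi$. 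This is a direct bookkeeping check: restricting the clutching $\mathbbm{1}\times\phi$ to a slice $\{\text{pt}\}\times D^{n-1}\subset S^1\times D^{n-1}$ reproduces Gromoll's construction on the subspace that corresponds to the $S^{n-1}$ wedge summand. The $S^n$ summand will play no role in what follows.

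Next I would project onto the $(n-1)$-st cohomological Postnikov stage to obtain a natural map $[M,\Top/O]\to H^{n-1}(M;\Theta_{n-1})$; under this projection, $\tau_i$ maps to $c_i^*\otimes[\Sigma_\phi]$, where $c_i^*=p_i^*(\iota_{n-1})\in H^{n-1}(M;\mathbb{Z})$ is the Poincar\'e dual of $c_i$. The homomorphism $\Delta$ supplies classes $\Delta_1,\ldots,\Delta_\ell\in H^1(M;\mathbb{Z})$; since $\Delta(c_1),\ldots,\Delta(c_\ell)$ generate $\mathbb{Z}^\ell$ and there are $\ell$ of them, they form a $\mathbb{Z}$-basis, so after changing coordinates on $\mathbb{Z}^\ell$ we may assume $\Delta_j(c_i)=\delta_{ij}$. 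Poincar\'e duality then gives $\Delta_j\smile c_i^*=\delta_{ij}\cdot[M]^*$ in $H^n(M;\mathbb{Z})$, so the composite invariant $\tau\mapsto\Delta_j\smile(\text{image of }\tau)\in H^n(M;\Theta_{n-1})\simeq\Theta_{n-1}$ takes the value $\delta_{ij}\cdot[\Sigma_\phi]$ on $\tau_i$. Since $[\phi]\in\pi_0\Diff(S^{n-2})$ is nontrivial, $[\Sigma_\phi]\ne 0$ in $\Theta_{n-1}$, so these invariants separate the $\tau_i$ pairwise.

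The main obstacle is the identification in Step 1 of $f_\phi|_{S^{n-1}}$ with the Gromoll clutching $[\Sigma_\phi]$; once that is in place the remainder is cup-product and Poincar\'e duality bookkeeping. A technical subtlety is that $[M,\Top/O]$ is not literally a cohomology group, but the argument only uses its canonical projection onto the primary cohomological invariant in degree $n-1$, which is enough to separate the $\tau_i$.
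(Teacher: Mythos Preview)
Your factorization of $\tau_i$ through a Pontryagin--Thom collapse $p_i\colon M\to S^{n-1}$ and the identification of $f_\phi|_{S^{n-1}}$ with $[\Sigma_\phi]\in\Theta_{n-1}$ are correct and agree with the paper. The gap is in the next step: there is in general \emph{no} natural map $[M,\Top/O]\to H^{n-1}(M;\Theta_{n-1})$ of the kind you invoke. The Postnikov tower furnishes a map $\Top/O\to P_{n-1}(\Top/O)$, but $P_{n-1}(\Top/O)$ is an Eilenberg--MacLane space only when $\Theta_k=0$ for all $k<n-1$, i.e.\ only for $n=8$. Said differently, a natural transformation $[-,\Top/O]\to H^{n-1}(-;\Theta_{n-1})$ is the same as a class in $H^{n-1}(\Top/O;\Theta_{n-1})$, and for such a class to send $\tau_i\mapsto \sigma_i\otimes[\Sigma_\phi]$ the element $\hat\phi\in\pi_{n-1}(\Top/O)$ must have nontrivial Hurewicz image in $H_{n-1}(\Top/O)$; you have done nothing to ensure this, and once $\Top/O$ has lower homotopy it can fail. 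So the ``canonical projection onto the primary cohomological invariant in degree $n-1$'' that you appeal to in the last paragraph is exactly the missing ingredient, not a technicality.

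The paper circumvents this by never leaving stable homotopy. It uses that $\Top/O$ is an infinite loop space to suspend, and then---this is where stable parallelizability is genuinely used---embeds $M$ in $T^\ell\times D^{2n}$ via $\Delta$ with trivial normal bundle, obtaining a Pontryagin--Thom collapse $\Sigma^{2n}(T^\ell_+)\to\Sigma^{n+\ell}(M_+)$. Composing with the suspended $\hat\iota$ gives $p\colon\Sigma^{2n}(T^\ell_+)\to\bigvee_\ell S^{2n+\ell-1}$, and the stable splitting of the torus, together with the hypothesis that $\Delta(c_1),\ldots,\Delta(c_\ell)$ generate $\Z^\ell$, shows $p$ admits a right homotopy inverse. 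Hence $\hat\iota^*\colon\bigoplus_\ell\Theta_{n-1}\to[M,\Top/O]$ is injective. Your pairing with $\Delta_j$ is the right intuition, but it has to be implemented as this umkehr/transfer map valued in $\pi_{n-1}(\Top/O)$, not as a projection to ordinary cohomology.
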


\begin{proof}
Given a codimension-0 embedding $\la:X\ra Y$ of open manifolds, we denote $\la'$ the induced map of 1-point compactifications, obtained by collapsing $Y\setminus X$ to a point.  Also $X_+$ denotes the space $X$ with a disjoint basepoint.

Let $\iota_1,\ld,\iota_\ell:S^1\times D^{n-1}\hra M$ be framings of $c_1,\ld,c_\ell$. Use $\iota_1,\ldots,\iota_\ell$ to define an embedding $\iota:\coprod_\ell S^1\ti D^{n-1}\hra M$. The induced collapse map has the form $\iota':M\ra\bigvee_\ell \Si^{n-1}(S^1_+)$. 
Consider the composition 
\[\hat\iota:M_+\ra M\xra{\iota'}\bigvee_\ell\Si^{n-1}(S^1_+)\ra \bigvee_\ell S^{n-1},\] where the last map is induced from the obvious maps $\Si^{n-1}(S^1_+)\simeq S^n\vee S^{n-1}\ra S^{n-1}$. It suffices to show that the induced map 
\[\hat\iota^*: \big[\bigvee_\ell S^{n-1},\>\Top/O\big]\ra\big[M_+,\>\Top/O\big]\]
is injective. This is because, under the bijection between concordance classes of smooth structures on $M$ and $[M,\Top/O]$, the concordance class of $M_{c_j,\phi}$ corresponds to the map 
\[M\xra{\hat\iota} \bigvee_\ell S^{n-1}\xra{\pi_j} S^{n-1}\xra{\hat\phi} \Top/O,\]
where $\pi_j$ collapses every sphere other than the $j$-th sphere to the basepoint, and $\hat\phi$ corresponds to $[\phi]\in\pi_0\Diff(S^{n-2})$ under the bijections $[S^{n-1},\Top/O]\simeq \Theta_{n-1}\simeq\pi_0\Diff(S^{n-2})$. 


To show that $\hat\iota^*$ is injective, we use that $\Top/O$ is an infinite loop space. In particular, there exists a space $Y$ such that $\Om^{n+\ell}Y\simeq\Top/O$, and for any space $A$, there are natural bijections $[A,\Top/O]\simeq[A,\Om^{n+\ell}Y]\simeq[\Si^{n+\ell}A,Y]$. This allows us to view $\hat\iota^*$ as map 
\[\big[\bigvee_\ell S^{2n+\ell-1},Y
\big]\ra\big[\Si^{n+\ell}(M_+),Y]. \]
This map can also be obtained by considering the embedding $\iota\times 1:\left(\coprod_\ell S^1\ti D^{n-1}\right)\times D^{n+\ell}\hra M\times D^{n+\ell}$ and the composition 
$\what{\iota\ti 1}:\Si^{n+\ell}(M_+)\xra{(\iota\times1)'} \bigvee_\ell\Si^{2n+\ell}(S^1_+)\ra\bigvee_\ell S^{2n+\ell-1}$, similar to before.  


The homomorphism $\De$ is induced by a map $\de:M\ra T^\ell$ to the torus, and we can assume $\de$ is smooth. Take a Whitney embedding $\ep:M\ra D^{2n}$, and consider the induced embedding $\de\ti\ep:M\ra T^\ell\ti D^{2n}$. Since $M$ is a stably parallelizable, $M\sbs T^\ell\ti D^{2n}$ has trivial normal bundle $\nu_M\simeq\ep^{n+\ell}$.  (To see this, observe that $TM\op\nu_M\simeq\ep^{2n+\ell}$. Since $M$ is stably parallelizable, $TM\oplus\ep\simeq\ep^{n+1}$, which implies that $\ep^{n+1}\oplus\nu_M\simeq\ep^{2n+\ell+1}$. Since $\text{rank}(\nu_M)>\dim M$, this implies that $\nu_M$ is the trivial bundle by \cite[Lem.\ 3.5]{kervaire-milnor}.) Then there is an embedding $\ka:M\ti D^{n+\ell}\ra T^\ell\ti D^{2n}$. 

Consider now the composition 
\[p:\label{eqn:torus-collapse}\Si^{2n}(T^\ell_+)\xra{\ka'} \Si^{n+\ell}(M_+)\xra{\what{\iota\times1}} \bigvee_\ell S^{2n+\ell-1}.\]
To prove the theorem, we show that the induced map 
\[p^*:\big[\bigvee_\ell S^{2n+\ell-1},Y\big]\ra\big[\Si^{2n}(T^\ell_+),Y\big]\]
is injective. First observe the homotopy equivalence $\Si^{2n}(T^\ell_+)\sim \bigvee_{i=0}^\ell{\ell\choose i}S^{2n+i}$. 
This follows from general homotopy equivalences $\Si(A_+)\sim \Si A\vee S^1$ and $\Si(A\times B)\sim \Si A\vee\Si B\vee \Si(A\wedge B)$. Since $\De(c_1),\ldots,\De(c_\ell)$ generate $\pi_1(T^\ell)$, the inclusion $\ell\> S^{2n+\ell-1}\sbs\bigvee_{i=0}^\ell{\ell\choose i}S^{2n+i}$ is a right inverse to $p$, up to homotopy. This implies that $p^*$ is injective. 
\end{proof}

\subsection{Outer automorphisms not realized by diffeomorphisms (Step 2)}

Next we apply Theorem \ref{thm:non-concordant} to give a criterion that guarantees that $\al\in\Out(\pi)$ is not in the image of $\Psi_N:\Diff(N)\ra\Out(\pi)$. 



\begin{thm}[obstruction to Nielsen realization]\label{thm:non-realize}
Let $M$ be a hyperbolic manifold and fix a simple closed geodesic $c$ in $M$. Let $N=M_{c,\phi}$ be an exotic smooth structure. Assume that $\al\in\Isom(M)\simeq\Out(\pi)$ is such that $M_{c,\phi}$ and $M_{\al(c),\phi}$ are not concordant. Then $\al\notin\im\Psi_N$. 
\end{thm}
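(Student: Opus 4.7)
The plan is to prove the contrapositive: assuming $\al=\Psi_N(f)$ for some $f\in\Diff(N)$, I will deduce that $M_{c,\phi}$ and $M_{\al(c),\phi}$ are concordant. The key idea is that, by Mostow rigidity, the diffeomorphism $f$ of $N$ can be compared up to homotopy with the isometry $\al$ of $M$; since concordance classes of smooth structures on $M$ are classified by $[M,\Top/O]$, working at the level of homotopy (rather than isotopy) of self-maps of $M$ will suffice.

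Concretely, fix a homeomorphism $h:N\to M$ and let $\sigma$ be the pushforward of the smooth structure of $N$, so that $[\sigma]=[M_{c,\phi}]$ in $[M,\Top/O]$; denote by $\chi_\sigma:M\to\Top/O$ a classifying map for $\sigma$. The conjugate $\til f:=hfh^{-1}$ is a self-homeomorphism of $M$ that is a diffeomorphism of $(M,\sigma)$, so $\til f^*\sigma=\sigma$ as smooth structures, whence $\chi_\sigma\circ\til f\simeq\chi_\sigma$. Because $\til f$ induces $\al$ on $\pi_1(M)=\pi$, Mostow rigidity (using also that $M$ is aspherical) provides a homotopy $\til f\simeq\al:M\to M$, and hence $\chi_\sigma\circ\al\simeq\chi_\sigma\circ\til f\simeq\chi_\sigma$. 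Equivalently, $\chi_\sigma\circ\al^{-1}\simeq\chi_\sigma$, which says that $\al_*\sigma$ is concordant to $\sigma$. Since $\al$ is a hyperbolic isometry carrying $c$ to $\al(c)$, and the surgery construction of $M_{c,\phi}$ is natural under diffeomorphisms of $M$, one has $\al_*(M_{c,\phi})=M_{\al(c),\phi}$ at the level of concordance classes. This supplies the desired concordance, contradicting the hypothesis.

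The argument is largely formal smoothing theory. The important conceptual point is that one only needs \emph{homotopy} between $\til f$ and $\al$, not isotopy through homeomorphisms; this is built into the classifying space $[M,\Top/O]$, so the only nontrivial geometric input is Mostow rigidity. The remaining bit of care is to verify the identification $\al_*(M_{c,\phi})=M_{\al(c),\phi}$ as concordance classes, which uses that $\al$ is a diffeomorphism of the hyperbolic $M$ and that the concordance class of $M_{c,\phi}$ is independent of the choice of framing of $c$ (as recalled in the paper).
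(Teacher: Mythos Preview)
Your overall strategy matches the paper's, and the identification $\al_*(M_{c,\phi})=M_{\al(c),\phi}$ and the use of Mostow rigidity are fine. However, there is a genuine gap at the implication
\[\til f^*\sigma=\sigma\ \Longrightarrow\ \chi_\sigma\circ\til f\simeq\chi_\sigma.\]
The bijection between concordance classes of smoothings and $[M,\Top/O]$ is set up relative to a reference smoothing (here the hyperbolic $\sigma_M$), and it is natural only for \emph{diffeomorphisms of $(M,\sigma_M)$}. For an arbitrary self-homeomorphism $g$ of $M$ the transformation law reads
\[\chi_{g^*\sigma'}=g^*\chi_{\sigma'}+\chi_{g^*\sigma_M}\quad\text{in }[M,\Top/O],\]
with a twist by the class of $g^*\sigma_M$. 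Applying this with $g=\til f$ and $\sigma'=\sigma$, and using $\til f^*\sigma=\sigma$ together with $\til f^*=\al^*$ on $[M,\Top/O]$ (since $\til f\simeq\al$), one obtains $\chi_\sigma-\al^*\chi_\sigma=\chi_{\til f^*\sigma_M}$. Because $\al$ \emph{is} a diffeomorphism of $(M,\sigma_M)$, the desired conclusion $\al^*\chi_\sigma=\chi_\sigma$ is therefore equivalent to $\til f^*\sigma_M$ being concordant to $\sigma_M$; equivalently, the self-homeomorphism $\al^{-1}\til f$, which is merely homotopic to $\id_M$, must pull $\sigma_M$ back to a concordant smoothing. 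This is \emph{not} a formal property of the homotopy functor $[-,\Top/O]$; your sentence ``one only needs homotopy \ldots\ this is built into the classifying space $[M,\Top/O]$'' is exactly where the argument is incomplete.

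What is missing is topological rigidity for $M\times[0,1]$: the homotopy $\al^{-1}\til f\simeq\id_M$ must be upgraded to a homeomorphism of $M\times[0,1]$ restricting to $\id$ and $\al^{-1}\til f$ on the ends, after which pulling back the product structure yields the required concordance. The paper does precisely this, invoking the Farrell--Jones rigidity theorem \cite[Cor.\ 10.6]{FJ-mostow}. With that ingredient inserted your argument becomes the paper's; without it the proof does not go through.
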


\begin{proof}
Suppose for a contradiction that there is a diffeomorphism $f:N\ra N$ such that $\Psi_N(f)=\al$. 

Set $N_0=N$ and $N_1=M_{\al(c),\phi}$, and observe that $\al:M\ra M$ induces a diffeomorphism $g_1:N_0\ra N_1$. Define $g_2=g_1\circ f^{-1}$. Denoting $h_i:N_i\ra M$ be the obvious homeomorphisms, the composition 
\[M\xra{h_0^{-1}} N_0\xra{g_2} N_1\xra{h_1}M\]
induces the identity on $\pi$ and is therefore homotopic to the identity. From this homotopy, we obtain a homotopy equivalence $H_0:M\ti[0,1]\ra M\ti[0,1]$, which restricts to a homeomorphism on the boundary. By \cite[Cor.\ 10.6]{FJ-mostow}, $H_0$ is homotopic rel boundary to a homeomorphism $H$. Then the composition 
\[N_0\times[0,1]\xra{h_0\ti\text{id}}M\ti[0,1]\xra{H}M\ti[0,1]\]
defines a smooth structure on $M\times[0,1]$ whose restriction to $M\ti\{i\}$ is $N_i$ for $i=0,1$, i.e.\ $N_0$ and $N_1$ are concordant. This contradicts our assumption, so $\alpha\notin\im\Psi_N$. 
\end{proof}

%
%
%
%
%
%
%

\subsection{Examples (Step 3)} To complete the proof of Theorem \ref{thm:asymmetric}, we explain how to obtain examples of stably parallelizable $M$ that satisfy the assumptions of Theorems \ref{thm:non-concordant} and \ref{thm:non-realize}. This is the content of the following proposition. 

\begin{prop}\label{prop:examples}
Fix $n\ge2$. For any $d\ge2$, there exists a stably parallelizable hyperbolic manifold $M^n$, a geodesic $c$, a subgroup $F<\Isom(M)$ isomorphic to $\Z/d\Z=\pair{\al}$, and $\rho\in H^1(M)\simeq\Hom(H_1(M),\Z)$ such that 
\begin{equation}\label{eqn:homomorphism}\rho(\al^jc)=\begin{cases}1&j=0\\0&1\le j\le d-1.\end{cases}\end{equation} Consequently, the homomorphism $\De:H_1(M)\ra\Z^d$ whose $i$-th coordinate is $\rho\circ\al^{-i}$ has the property that $\De(c),\ld,\De(\al^{d-1}c)$ generate $\Z^d$. 
\end{prop}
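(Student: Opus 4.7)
The plan is to construct $M$ as an $F$-equivariant congruence cover of a Belolipetsky--Lubotzky base $V$, arranging the cover so that the $F$-orbit of a chosen geodesic $c$ spans a direct $\Z$-summand of $H_1(M;\Z)/\mathrm{tors}$ of rank exactly $d$.

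First I apply Theorem~\ref{thm:BL+} with $F = \Z/d\Z$ to obtain a closed hyperbolic $V^n$ with $\Isom(V) \simeq F = \pair{\al}$ acting freely. Part (b) of that theorem produces a stably parallelizable cover $\what V \ra V$ of degree $\ell \in \{1,2,4\}$; since this cover is defined via kernels of reduction-mod-$\mathfrak{p}$ maps that are intrinsic to the commensurator $\Ga$, the corresponding finite-index subgroup of $\pi_1(V)$ is normal in $\Ga$ and in particular $F$-invariant, so $F$ lifts to a free isometric action on $\what V$. Replacing $V$ by $\what V$, I may assume $V$ itself is stably parallelizable. I then fix a simple closed geodesic $\bar c \sbs V/F$ and let $c \sbs V$ be one of its lifts; by freeness of the $F$-action the set $\{c, \al c, \ldots, \al^{d-1} c\}$ consists of $d$ pairwise disjoint simple closed geodesics.

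The heart of the argument, and the step I expect to be the main obstacle, is to pass to a further $F$-equivariant finite cover $M \ra V$ in which the orbit classes $\{[\al^j c]\}_{j=0}^{d-1}$ generate a rank-$d$ direct summand of $H_1(M;\Z)/\mathrm{tors}$. The input is a virtual-cohomology statement in the spirit of Millson (and later Bergeron--Clozel, Venkataramana): any finite collection of closed geodesics in an arithmetic hyperbolic manifold becomes $\Q$-linearly independent in $H_1$ after passing to a suitable congruence cover, and the direct-summand (primitivity) condition can be achieved in a still deeper congruence cover. To maintain $F$-equivariance, I replace the cover-defining finite-index subgroup by the intersection of its $F$-conjugates, which remains finite-index. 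The stable parallelizability, free $F$-action, and $\Isom(M) \simeq F$ conditions persist in this cover by the same congruence-subgroup arguments used in the proof of Theorem~\ref{thm:BL+}, since the further cover is again defined by the kernel of a reduction-modulo-ideal map intrinsic to $\Ga$.

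Once the direct-summand property holds, decompose $H_1(M;\Z)/\mathrm{tors} \simeq \bigoplus_{j=0}^{d-1} \Z\,[\al^j c] \op W$ and define $\rho \in H^1(M;\Z) = \Hom(H_1(M;\Z),\Z)$ to be the dual functional sending $[c] \mapsto 1$, $[\al^j c] \mapsto 0$ for $1 \le j \le d-1$, and vanishing on $W$ and on the torsion subgroup. Then $\rho$ satisfies (\ref{eqn:homomorphism}). The consequence about $\De$ is an immediate calculation: the $i$-th coordinate of $\De(\al^j c)$ is $\rho(\al^{-i}\al^j c) = \rho(\al^{j-i} c)$, which equals $\delta_{ij}$ (since $j-i \in \{1,\ldots,d-1\}$ mod $d$ when $i \ne j$), so $\De(\al^j c) = e_j$ and these span $\Z^d$.
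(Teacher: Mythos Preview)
Your argument has a genuine gap at the step you yourself flag as the main obstacle. The claim that ``any finite collection of closed geodesics in an arithmetic hyperbolic manifold becomes $\Q$-linearly independent in $H_1$ after passing to a suitable congruence cover'' is not what Millson, Bergeron--Clozel, or Venkataramana prove: those results establish virtual positivity (or growth) of the first Betti number, not independence of \emph{prescribed} geodesic classes. As stated the claim is false (e.g.\ take $c_2$ freely homotopic to $c_1^2$), and even restricted to an $F$-orbit of equal-length curves it is not a result you can cite. Moreover, the Belolipetsky--Lubotzky manifolds you start from are built on Gromov--Piatetski-Shapiro \emph{non}-arithmetic lattices, so arithmetic virtual-cohomology theorems do not apply to them directly. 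You also assert that $\Isom(M)\simeq F$ persists under your further cover, which is generally false (covers can acquire new isometries); fortunately the proposition only asks for $F<\Isom(M)$, so this overclaim is harmless, but it suggests the equivariance bookkeeping has not been checked carefully.

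The paper's proof bypasses all of this with an elementary construction. The manifolds in question (via \cite{lubotzky-betti}) admit a surjection $\pi_1(X)\onto F_r$ onto a nonabelian free group; after passing to a stably parallelizable cover this persists. One then takes the $\Z/d\Z$-cover $Y\ra X$ induced by $F_r\onto\Z/d\Z$ sending one generator to $1$ and the rest to $0$. The kernel is $F_k$ with $k=1+d(r-1)$, and a direct graph-covering computation shows $H_1(F_k)\simeq\Z\oplus\Z[\Z/d\Z]^{r-1}$ as a $\Z/d\Z$-module. Pulling back a free generator and its dual gives $c$ and $\rho$ satisfying (\ref{eqn:homomorphism}) immediately. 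No virtual-cohomology input beyond the free-group surjection is required, and that surjection is already built into the amalgamated-product structure of the GPS lattices.
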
 

In \cite{lubotzky-betti}, Lubotzky gave examples of hyperbolic $M$ (both arithmetic and non-arithmetic) with a surjection $\pi_1(M)\onto F_r$ to a free group of rank $r\ge2$. By passing to a cover, we can assume that $M$ is stably parallelizable \cite[pg.\ 553]{sullivan-stably-parallelizable}. Proposition \ref{prop:examples} is proved by passing to a further cover, using the general procedure of the following lemma.

\begin{lem}\label{lem:covers-betti}
Let $X$ be a CW-complex, and let $F_r$ denote a free group of rank $r\ge2$. Assume there is a surjection $\pi_1(X)\onto F_r$. Then for any $d\ge2$, there exists a regular cover $Y\ra X$ with deck group $\Z/d\Z=\pair{\al}$ and $c\in\pi_1(Y)$ and $\rho\in H^1(Y)$ satisfying (\ref{eqn:homomorphism}). 
\end{lem}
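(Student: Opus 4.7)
The plan is to reduce the entire construction to a group-theoretic calculation inside $F_2=\langle a,b\rangle$. Since $r\ge 2$, the assumed surjection $\pi_1(X)\onto F_r$ can be post-composed with a surjection $F_r\onto F_2$ (killing all but two generators), yielding $\phi:\pi_1(X)\onto F_2$. I would then define $\psi:F_2\to\Z/d\Z$ by $a\mapsto 1$, $b\mapsto 0$, and take $Y\to X$ to be the regular cover corresponding to $\ker(\psi\circ\phi)$. By construction the deck group is $\Z/d\Z=\langle\alpha\rangle$, and fixing a lift $\tilde\alpha\in\pi_1(X)$ with $\phi(\tilde\alpha)=a$ realizes the $\alpha$-action on $\pi_1(Y)$ as conjugation by $\tilde\alpha$.

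The key computation happens in $K:=\ker\psi\subset F_2$. Reidemeister--Schreier, applied to the transversal $\{1,a,\ldots,a^{d-1}\}$, shows that $K$ is free on the basis $\{a^d\}\cup\{a^j b a^{-j}:0\le j\le d-1\}$. Since $\phi$ is surjective and $K=\psi^{-1}(0)$, the restriction of $\phi$ sends $\pi_1(Y)$ onto $K$. I would then define $\rho_0:K\to\Z$ on this free basis by $\rho_0(b)=1$ and $\rho_0=0$ on every other basis element, and set $\rho=\rho_0\circ\phi\in\Hom(\pi_1(Y),\Z)=H^1(Y)$. Finally, I would pick any $c\in\pi_1(Y)$ with $\phi(c)=b$. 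Verifying (\ref{eqn:homomorphism}) is then a one-line check: $\phi(\alpha^j c)=a^j b a^{-j}$, so $\rho(\alpha^j c)=\rho_0(a^j b a^{-j})=\delta_{j,0}$.

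I expect the only conceptual subtlety to be well-definedness: a different lift $\tilde\alpha$ changes the $\alpha$-action by an inner automorphism of $\pi_1(Y)$, and a different choice of $c$ changes $c$ by an element of $\ker\phi$. Both ambiguities disappear upon passing to $H_1(Y)$, where $\rho$ actually lives and where inner automorphisms act trivially, so the pairing $\rho(\alpha^j c)$ is unambiguous. Everything else is a direct reading of the Reidemeister--Schreier basis, so I anticipate no further obstacles.
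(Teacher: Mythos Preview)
Your proof is correct and follows essentially the same approach as the paper: both take the cover determined by the surjection $\pi_1(X)\onto F_r\onto\Z/d\Z$ (sending one generator to $1$ and the rest to $0$), compute a free basis for the kernel via Reidemeister--Schreier / the associated graph cover, and read off $c$ and $\rho$ from that basis. Your preliminary reduction to $F_2$ is a harmless simplification of the paper's argument for general $r$.
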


\begin{proof}
Take $F_r$ with generators $a_1,\ld,a_r$. Consider $F_r\onto\Z/d\Z$ defined by $a_1\mapsto 1$ and $a_i\mapsto 0$ for $2\le i\le r$. Then $\ker[F_r\onto\Z/d\Z]\simeq F_k$ with $k=1+d(r-1)$. It's easy to compute $H_1(F_k)$ as a $F=\Z/d\Z$-module:
\[H_1(F_k)\simeq \Z\{b_1\}\oplus\Z F\{b_2,\ld,b_k\}.\]
(For example, realize $1\ra F_k\ra F_r\ra\Z/d\Z\ra 0$ as a $(\Z/d\Z)$-covering of graphs.) Then also $H^1(F_k)\simeq\Z\{\be_1\}\oplus\Z F\{\be_2,\ld,\be_k\}$, where $\be_i$ is dual to $b_i$. 

Let $Y\ra X$ be the cover such that $\pi_1(Y)=\ker\big[\pi_1(X)\onto F_r\onto\Z/d\Z\big]$. Then $\pi_1(Y)\onto F_k$, and $H_1(Y)\ra H_1(F_k)$ is $(\Z/d\Z)$-equivariant. Choose $c\in\pi_1(Y)$ so that $c\mapsto b_2$ under $\pi_1(Y)\onto F_k$, and define $\rho:\pi_1(Y)\onto F_k\xra{\be_2}\Z$. It's easy to verify that $\rho$ satisfies (\ref{eqn:homomorphism}). 
This proves the lemma.
\end{proof}

\bibliographystyle{alpha}
\bibliography{nielsen-bib}

\end{document}